\undefined\RequirePackage{dsfont}\fi
\undefined\RequirePackage{amsmath,amsfonts,amssymb,amsthm}\fi
\title{{\Large \bfseries{Brownian motion and the distance to a submanifold}}}
\author{James Thompson%
  \footnote{University of Warwick, Email: \texttt{j.thompson.2@warwick.ac.uk}}}
\date{\today}
\def\@MRExtract#1 #2!{#1}
\newcommand{\MR}[1]{
  \xdef\@MRSTRIP{\@MRExtract#1 !}%
  \href{http://www.ams.org/mathscinet-getitem?mr=\@MRSTRIP}{MR-\@MRSTRIP}}
\renewenvironment{thebibliography}[1]{%
  \section*{\refname
    \@mkboth{\MakeUppercase\refname}{\MakeUppercase\refname}}%
  \phantomsection%
  \addcontentsline{toc}{section}{\refname}%
  \list{\@biblabel{\@arabic\c@enumiv}}%
  {\settowidth\labelwidth{\@biblabel{#1}}%
    \small%
    \setlength{\labelsep}{0.4em}%
    \setlength{\leftmargin}{\labelwidth}%
    \addtolength{\leftmargin}{\labelsep}%
    \setlength{\itemsep}{-.25em}%
    \@openbib@code
    \usecounter{enumiv}%
    \let\p@enumiv\@empty
    \renewcommand\theenumiv{\@arabic\c@enumiv}}%
  \sloppy\clubpenalty4000\@clubpenalty\clubpenalty\widowpenalty4000%
  \sfcode`\.\@m}{%
  \def\@noitemerr{%
    \@latex@warning{Empty `thebibliography' environment}}%
  \endlist}
\let\mathbb=\mathds
\DeclareMathOperator{\vol}{vol}
\DeclareMathOperator{\Ric}{Ric}
\DeclareMathOperator{\Cut}{Cut}
\DeclareMathOperator{\Ent}{Ent}
\DeclareMathOperator{\scal}{scal}
\newcommand{\dimm}{m}
\newcommand{\dimn}{n}
\newcommand{\sv}{k}
\newcommand{\mom}{p}
\newcommand{\secur}{\kappa}
\newtheorem{theorem}{Theorem}[section]
\newtheorem{lemma}[theorem]{Lemma}
\newtheorem{proposition}[theorem]{Proposition}
\newtheorem{corollary}[theorem]{Corollary}
\newtheorem{remark}[theorem]{Remark}
\newtheorem{example}[theorem]{Example}
\begin{document}

\maketitle

\begin{abstract}%
   \noindent%
   {We present a study of the distance between a Brownian motion and a submanifold of a complete Riemannian manifold. We include a variety of results, including an inequality for the Laplacian of the distance function derived from a Jacobian comparison theorem, a characterization of local time on a hypersurface which includes a formula for the mean local time, an exit time estimate for tubular neighbourhoods and a concentration inequality. We derive the concentration inequality using moment estimates to obtain an exponential bound, which holds under fairly general assumptions and which is sufficiently sharp to imply a comparison theorem. We provide numerous examples throughout. Further applications will feature in a subsequent article, where we see how the main results and methods presented here can be applied to certain study objects which appear naturally in the theory of \textit{submanifold bridge processes}.}\\[1em]%
   {\footnotesize%
     \textbf{Keywords: }{Brownian motion ; local time ; concentration ; submanifold ; tube ; distance}\par%
     \noindent\textbf{AMS MSC 2010: }%
     {58J65 ; 53B21 ; 60J55}\par
   }
 \end{abstract}

\section*{Introduction}

Suppose that $M$ is a complete and connected Riemannian manifold of dimension $\dimm$, that $N$ is a closed embedded submanifold of $M$ of dimension $\dimn\in\lbrace 0,\ldots,\dimm-1\rbrace$ and that $X(x)$ is a Brownian motion on $M$ starting at $x\in M$ with explosion time $\zeta(x)$. The main objective of this paper is to study the distance between $X_t(x)$ and $N$ for each $t\geq 0$. This is not something which has previously been considered in the literature, the closest reference being the study of mean exit times given by Gray, Karp and Pinksy in \cite{MR830234}, and our main results are Theorems \ref{th:estimatefinal}, \ref{th:expenbm} and \ref{th:concenthm}. We denote by $r_N$ the distance function and assume that there exist constants $\nu \geq 1$ and $\lambda \in \mathbb{R}$ such that the Lyapunov-like condition
\begin{equation}\label{eq:lapineqintro}
\frac{1}{2}\triangle r_N^2 \leq \nu + \lambda r_N^2
\end{equation}
holds off the cut locus of $N$. Geometric conditions under which such an inequality arises, which allow for unbounded curvature, are given by Theorem \ref{th:estimatefinal} (see Corollary \ref{cor:maincor}), which is derived from the classical Heintze-Karcher comparison theorem \cite{MR533065}. Under such assumptions we deduce a variety of probabilistic estimates, which are presented in Section \ref{se:section3}. We do so first using a logarithmic Sobolev inequality. Such inequalities were originally studied by Gross in \cite{MR0420249} and we refer to the article \cite{MR1453132} for the special case of the heat kernel measure. We then prove more general estimates using the It\^{o}-Tanaka formula of Section \ref{se:section2}, which is derived from the formula of Barden and Le \cite{MR1314177} and which reduces to the formula of Cranston, Kendall and March \cite{MR1231929} in the one-point case. The basic method is similar to that of Hu in \cite{MR1694764}, who studied (uniform) exponential integrability for diffusions in $\mathbb{R}^\dimm$ for $C^2$ functions satisfying another Lyapunov-like condition. Indeed, several of our results could just as well be obtained for such functions, but we choose to focus on the distance function since in this case we have a geometric interpretation for the Laplacian inequality \eqref{eq:lapineqintro}. Section \ref{se:section2} also includes a characterization of the local time of Brownian motion on a hypersurface and a couple of examples. The probabilistic estimates of Section \ref{se:section3} include Theorem \ref{th:secradmomthm} and its generalization Theorem \ref{th:evenradmomthm}, which provide upper bounds on the even moments of the distance $r_N(X_t(x))$ for each $t\geq 0$. Using properties of Laguerre polynomials we use these estimates to deduce Theorems \ref{th:inequality2} and \ref{th:expenbm}, which provide upper bounds on the moment generating functions of the distance $r_N(X_t(x))$ and squared distance $r_N^2(X_t(x))$ for each $t\geq 0$, respectively. The latter estimate improves and generalizes a theorem of Stroock. Using Theorem \ref{th:expenbm} and Markov's inequality we then deduce Theorem \ref{th:concenthm}, which provides an concentration inequality for tubular neighbourhoods. Note that in this paper we do not assume the existence of Gaussian upper bounds for the heat kernel; the constants appearing in our estimates are all explicit. This paper has been presented in such a way that it should be possible for the reader to read Section \ref{se:section3} either before or after Sections \ref{se:section1} and \ref{se:section2}. Finally, the author wishes to thank his supervisor Xue-Mei Li for supporting this piece of work with numerous insightful suggestions. He also wishes to thank David Elworthy and an anonymous referee for providing additional comments and criticism.

\section{Geometric Inequalities}\label{se:section1}

We begin by deriving an inequality for the Laplacian of the distance function. This object can be written in terms of the Jacobian determinant of the normal exponential map, so we begin with a comparison theorem and inequalities for this object. The main references here are \cite{MR533065} and \cite{MR1390760}.

\subsection{Heintze-Karcher Inequalities}\label{ss:hkest}

Suppose that $M$ is a complete and connected Riemannian manifold of dimension $\dimm$ with Riemannian volume measure $\vol_M$ and that $N$ is a closed embedded submanifold of $M$ of dimension $\dimn \in \lbrace 0,\ldots,\dimm-1\rbrace$ with induced measure $\vol_N$ and normal bundle $\pi_N :TN^\bot\rightarrow N$. Fix $\xi\in UTN^\bot:=\lbrace \xi \in TN^\bot : \|\xi \|=1\rbrace$ and choose $t_1 \in (0,f_N (\xi))$ where $f_N(\xi)$ denotes the first focal time along the geodesic $\gamma_\xi$ with $\gamma_\xi(0)=\pi_N (\xi)$ and $\dot \gamma_\xi(0)=\xi$. If $p \in M$ with $\sigma_p$ a two-dimensional subspace of $T_p M$ then denote by $K(\sigma_p)$ the sectional curvature of $\sigma_p$ and let
\begin{equation*}
\underline{\secur}_{\xi}(t_1):= \min \left\{   
  K(\sigma_{\gamma_\xi (t)})\colon \begin{aligned} & \sigma_{\gamma_\xi (t)} \text{ is any two-dimensional subspace of}\\ 
  &T_{\gamma_\xi (t)} M \text{ containing } \dot{\gamma_\xi} (t) \text{ for any } t\in \left[0,t_1\right]
  \end{aligned}
\right\}.
\end{equation*}
As regards the extrinsic geometry of the submanifold, denote by $A_\xi$ the shape operator associated to $\xi$ and by $\lambda_1(\xi),\ldots,\lambda_\dimn(\xi)$ the eigenvalues of $A_\xi$. These eigenvalues are called the \textit{principle curvatures of $N$ with respect to $\xi$} and their arithmetic mean, denoted by $H_\xi$, is called the \textit{mean curvature of $N$ with respect to $\xi$}. Let $\theta_N:TN^{\bot}\rightarrow \mathbb{R}$ denote the Jacobian determinant of the normal exponential map $\exp_N:TN^\bot \rightarrow TM$ (which is simply the exponential map of $M$ restricted to $TN^\bot$) and for $\kappa,\lambda \in \mathbb{R}$ define, for comparison, functions $S_\kappa$, $C_\kappa$, $G_{\kappa}$ and $F_{\kappa}^{\lambda}$ by
\begin{equation*}
\begin{split}
S_\kappa (t) &:=
\begin{cases}
\frac{1}{\sqrt{\kappa}} \sin \sqrt{\kappa}t    &\mbox{if } \kappa > 0 \\
					   t    &\mbox{if } \kappa = 0 \\
\frac{1}{\sqrt{-\kappa}} \sinh \sqrt{-\kappa}t &\mbox{if } \kappa < 0 \\
\end{cases}
\end{split}
\end{equation*}
\begin{equation*}
\begin{split}
C_\kappa(t) &:= \frac{d}{dt} S_\kappa(t)\\[0.2cm]
G_{\kappa}(t)&:= \frac{d}{dt}\log( S_\kappa(t)/t)\\[0.2cm]
F^\lambda_\kappa(t)&:= \frac{d}{dt}\log (C_{\kappa} (t) + \lambda S_{\kappa} (t)).
\end{split}
\end{equation*}
If $\secur_{\xi}(t_1)$ is \textit{any} constant such that $\secur_{\xi}(t_1) \leq \underline{\secur}_{\xi}(t_1)$ then, as was proved by Heintze and Karcher in \cite{MR533065} using a comparison theorem for Jacobi fields, and which was generalized somewhat by Kasue in \cite{MR722530}, there is the inequality
\begin{equation}\label{eq:logestimate}
\frac{d}{dt} \log \theta_N (t \xi) \leq (\dimm-\dimn-1)G_{\secur_{\xi}(t_1)} (t)+ \sum_{i=1}^{\dimn} F_{\secur_{\xi}(t_1)}^{\lambda_i(\xi)}(t)
\end{equation}
for all $0\leq t \leq t_1$. Furthermore, if $\underline{\rho}_{\xi}(t_1)$ satisfies
\begin{equation*}
(\dimm-1)\underline{\rho}_{\xi}(t_1)=\min\lbrace \Ric(\dot \gamma_\xi(t)):0\leq t\leq t_1\rbrace 
\end{equation*}
and $\rho_{\xi}(t_1)$ is \textit{any} constant such that $\rho_{\xi}(t_1) \leq \underline{\rho}_{\xi}(t_1)$ then for $\dimn=0$ there is the inequality
\begin{equation}\label{eq:logestimate2}
\frac{d}{dt} \log \theta_N (t \xi) \leq (\dimm-1)G_{\rho_{\xi}(t_1)} (t)
\end{equation}
for all $0\leq t \leq t_1$ and for $\dimn=\dimm-1$ there is the inequality
\begin{equation}\label{eq:logestimate3}
\frac{d}{dt} \log \theta_N (t \xi) \leq (\dimm-1) F_{\rho_{\xi}(t_1)}^{H_\xi}(t)
\end{equation}
for all $0\leq t \leq t_1$. Heintze and Karcher's method implies that the right-hand sides of inequalities \eqref{eq:logestimate}, \eqref{eq:logestimate2} and \eqref{eq:logestimate3} are finite for all $0\leq t \leq t_1$. Note that the `empty sum is zero' convention is used to cover the case $\dimn=0$ in inequality \eqref{eq:logestimate}.

\subsection{Jacobian Inequalities}\label{ss:jacineq}

We can use the Heintze-Karcher inequalities to deduce secondary estimates.
\begin{proposition}\label{prop:propos}
For $0\leq t\leq t_1$ we have
\begin{equation*}
\frac{d}{dt} \log \theta_N (t \xi) \leq  (\dimm-1) \sqrt{\vert{\underline{\secur}_\xi(t_1)\wedge 0}\vert} + \sum_{i=1}^{\dimn} \vert \lambda_i(\xi) \vert.
\end{equation*}
\end{proposition}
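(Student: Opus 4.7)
The plan is to invoke the Heintze-Karcher inequality \eqref{eq:logestimate} with the particular choice $\secur := \underline{\secur}_\xi(t_1) \wedge 0$, which is admissible since this $\secur \leq \underline{\secur}_\xi(t_1)$, and then to establish the two pointwise comparison bounds
\[
G_{\secur}(t) \leq \sqrt{-\secur}, \qquad F_{\secur}^{\lambda}(t) \leq \sqrt{-\secur} + |\lambda|
\]
for arbitrary $\lambda\in\mathbb{R}$ and all $0\leq t\leq t_1$. Once these are in hand, summing across \eqref{eq:logestimate} yields
\[
(\dimm-\dimn-1)\sqrt{-\secur} + \sum_{i=1}^{\dimn}\left(\sqrt{-\secur} + |\lambda_i(\xi)|\right) = (\dimm-1)\sqrt{-\secur} + \sum_{i=1}^{\dimn}|\lambda_i(\xi)|,
\]
which is exactly the claimed inequality since $-\secur = |\underline{\secur}_\xi(t_1)\wedge 0|$ by construction.

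For the $G$-term, writing $a := \sqrt{-\secur}\geq 0$ I have $G_{\secur}(t) = a\coth(at) - 1/t$, so after the substitution $u = at$ the inequality $G_{\secur}(t)\leq a$ reduces to the elementary bound $u\coth u - 1 \leq u$ for $u>0$, which follows at once from $\sinh u \geq u$ (equivalently $e^{2u}\geq 1+2u$).

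The $F$-term bound is the only nontrivial step and is the main place to be careful. The cleanest route is to observe that $u(t) := F_{\secur}^{\lambda}(t)$ is the logarithmic derivative of $J(t) := C_{\secur}(t) + \lambda S_{\secur}(t)$, which solves the Jacobi equation $J'' = -\secur J$ with $J(0)=1$ and $J'(0) = \lambda$. Hence $u$ satisfies the Riccati equation $u' = -\secur - u^2$ with $u(0) = \lambda$. For $\secur \leq 0$ the value $\sqrt{-\secur}$ is a stable equilibrium from above and below, and a brief phase-line analysis on $[0,t_1]$ (where $J$ remains positive by the focal point hypothesis, keeping $u$ finite) shows that $u(t) \leq \max(\lambda,\sqrt{-\secur}) \leq \sqrt{-\secur} + |\lambda|$. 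An equivalent direct verification uses the identities $C_{\secur} + \lambda S_{\secur} = \cosh(at+\phi)/\cosh\phi$ with $\tanh\phi = \lambda/a$ when $|\lambda|\leq a$, and $C_{\secur} + \lambda S_{\secur} = \sinh(at+\phi)/\sinh\phi$ with $\coth\phi = \lambda/a$ when $|\lambda|>a$, reducing $F_{\secur}^{\lambda}$ to $a\tanh(at+\phi)$ or $a\coth(at+\phi)$ and making the bound $\max(a,|\lambda|)$ transparent. The case split between $|\lambda|\leq\sqrt{-\secur}$ and $|\lambda|>\sqrt{-\secur}$ is where a little care is required, but either formulation dispatches it cleanly.
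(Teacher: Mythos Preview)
Your proof is correct and follows essentially the same strategy as the paper: apply the Heintze--Karcher inequality \eqref{eq:logestimate} and bound the $G$- and $F$-terms separately by $\sqrt{-\secur}$ and $\max(\lambda,\sqrt{-\secur})$ respectively. The paper establishes these same bounds via two preparatory lemmas split on the sign of $\underline{\secur}_\xi(t_1)$, using $\secur=0$ when $\underline{\secur}_\xi(t_1)\geq 0$ and $\secur=\underline{\secur}_\xi(t_1)$ otherwise, and handles the $F$-term by directly computing $\tfrac{d}{dt}F_\secur^\lambda$ to determine monotonicity. Your choice $\secur=\underline{\secur}_\xi(t_1)\wedge 0$ merges the two cases into one, and your Riccati phase-line argument (or the equivalent $\cosh/\sinh$ shift identities) replaces the explicit derivative computation; both are mild but genuine streamlinings of the paper's presentation, yielding the same intermediate bound $F_\secur^\lambda\leq\max(\lambda,\sqrt{-\secur})$ before passing to $\sqrt{-\secur}+|\lambda|$.
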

By Gronwall's inequality and the fact that $\theta_N\vert_N\equiv 1$, this differential inequality implies an upper bound on $\theta_N$. By a change of variables, upper bounds on $\theta_N$ imply upper bounds on the volumes of tubular neighbourhoods and the areas of the boundaries. In this way we can obtain estimates on these objects which are more explicit than those found in \cite{MR2024928}. We will not present these calculations here; they can be found in \cite{THESIS}. To prove the proposition we will use two preliminary lemmas.
\begin{lemma}\label{lem:Case1}
If ${\underline{\secur}_\xi(t_1)} \geq 0$ then for $0\leq t \leq t_1$ we have
\begin{equation*}
\frac{d}{dt} \log \theta_N (t\xi) \leq \sum_{i=1}^\dimn  \lambda_i(\xi).
\end{equation*}
\end{lemma}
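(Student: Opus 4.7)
The plan is to specialise the Heintze-Karcher inequality \eqref{eq:logestimate} to the choice $\secur_\xi(t_1)=0$, which is permitted by the hypothesis $\underline{\secur}_\xi(t_1)\geq 0$. With $\kappa=0$ the comparison functions simplify to $S_0(t)=t$ and $C_0(t)=1$, so that $G_0\equiv 0$ and $F_0^{\lambda_i(\xi)}(t)=\lambda_i(\xi)/(1+\lambda_i(\xi)t)$. The inequality therefore reduces to
\begin{equation*}
\frac{d}{dt}\log\theta_N(t\xi)\leq \sum_{i=1}^{\dimn}\frac{\lambda_i(\xi)}{1+\lambda_i(\xi)t},
\end{equation*}
and the lemma follows once each summand is dominated by $\lambda_i(\xi)$.

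To carry out that final step I would invoke the remark following inequality \eqref{eq:logestimate3}, which records that Heintze-Karcher's method ensures the right-hand side of \eqref{eq:logestimate} is finite throughout $[0,t_1]$; specialised to $\kappa=0$ this is precisely the statement that $1+\lambda_i(\xi)t>0$ for every $i$ and every $t\in[0,t_1]$. A short case check then concludes: if $\lambda_i(\xi)\geq 0$ the denominator is at least $1$ and the bound is immediate, while if $\lambda_i(\xi)<0$ the denominator lies in $(0,1]$ and dividing the target inequality $\lambda_i(\xi)/(1+\lambda_i(\xi)t)\leq \lambda_i(\xi)$ by the negative number $\lambda_i(\xi)$ reverses it to the trivial statement $1/(1+\lambda_i(\xi)t)\geq 1$.

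The only delicate point is the assertion that $1+\lambda_i(\xi)t$ does not vanish on the full interval $[0,t_1]$; this is not obvious purely from $t_1$ being less than the first focal time along $\gamma_\xi$, but it is precisely what the Heintze-Karcher finiteness statement delivers, so no further work is needed. Once that is in hand the proof is essentially a matter of substituting $\kappa=0$ into the comparison functions, so I do not anticipate any genuine obstacle beyond checking this positivity carefully.
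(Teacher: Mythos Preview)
Your proposal is correct and follows essentially the same approach as the paper: specialise \eqref{eq:logestimate} with $\secur_\xi(t_1)=0$, compute $G_0\equiv 0$ and $F_0^{\lambda_i(\xi)}(t)=\lambda_i(\xi)/(1+\lambda_i(\xi)t)$, and then bound each summand by $\lambda_i(\xi)$ via the sign split. You are more explicit than the paper about why $1+\lambda_i(\xi)t>0$ on $[0,t_1]$, invoking the finiteness remark after \eqref{eq:logestimate3}; the paper's proof leaves this point implicit.
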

\begin{proof}
Setting $\secur_{\xi}(t_1)=0$ we have $S_{\secur_\xi(t_1)} (t) = t$ and $C_{\secur_\xi(t_1)} (t) = 1$ and from inequality \eqref{eq:logestimate} it follows that
\begin{equation*}
\frac{d}{dt} \log \theta_N (t\xi) \leq \sum_{i=1}^{\dimn} \frac{\lambda_i(\xi)}{1+\lambda_i(\xi) t}
\end{equation*}
for all $0\leq t \leq t_1$. The result follows by considering the cases $\lambda_i(\xi) \geq 0$ and $\lambda_i(\xi) < 0$ separately.
\end{proof}
\begin{lemma}\label{lem:Case3}
If ${\underline{\secur}_\xi(t_1)} < 0$ then for $0\leq t \leq t_1$ we have
\begin{align}
\frac{d}{dt} \log \theta_N (t \xi) &\leq (\dimm-\dimn-1)\sqrt{-{\underline{\secur}_\xi(t_1)}}\nonumber \\
&\quad\quad+\sum_{i=1}^{\dimn} \left( \sqrt{-{\underline{\secur}_\xi(t_1)}} \mathbb{1}_{\{ \vert \lambda_i(\xi) \vert < \sqrt{-{\underline{\secur}_\xi(t_1)}}\}}+\lambda_i(\xi)\mathbb{1}_{\{\vert \lambda_i(\xi) \vert \geq \sqrt{-{\underline{\secur}_\xi(t_1)}} \}} \right).\nonumber
\end{align}
\end{lemma}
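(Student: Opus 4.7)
The plan is to apply the Heintze-Karcher inequality \eqref{eq:logestimate} with the admissible choice $\kappa_\xi(t_1) = \underline{\kappa}_\xi(t_1) < 0$ and then to bound the two comparison functions $G_{\underline{\kappa}_\xi(t_1)}$ and $F_{\underline{\kappa}_\xi(t_1)}^{\lambda_i(\xi)}$ individually. Write $\kappa := -\underline{\kappa}_\xi(t_1) > 0$, so that $S_{-\kappa}(t) = \sinh(\sqrt{\kappa}t)/\sqrt{\kappa}$ and $C_{-\kappa}(t) = \cosh(\sqrt{\kappa}t)$.

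For the first term, I would compute $G_{-\kappa}(t) = \sqrt{\kappa}\coth(\sqrt{\kappa}t) - 1/t$ and show the pointwise bound $G_{-\kappa}(t) \leq \sqrt{\kappa}$. Setting $x = \sqrt{\kappa}t$, this reduces to $\coth(x) - 1/x \leq 1$, equivalently $2x \leq e^{2x} - 1$, which is immediate from the elementary inequality $e^y \geq 1+y$. This handles the $(\dimm-\dimn-1)G_{-\kappa}(t)$ contribution.

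For the second term, after differentiating I would write
\begin{equation*}
F^{\lambda}_{-\kappa}(t) = \sqrt{\kappa}\,\frac{\sinh(\sqrt{\kappa}t) + \mu\cosh(\sqrt{\kappa}t)}{\cosh(\sqrt{\kappa}t) + \mu\sinh(\sqrt{\kappa}t)}, \qquad \mu := \lambda/\sqrt{\kappa},
\end{equation*}
and split on whether $|\lambda| < \sqrt{\kappa}$ or $|\lambda|\geq \sqrt{\kappa}$. If $|\mu|<1$, I claim $F^\lambda_{-\kappa}(t)\leq \sqrt{\kappa}$: the difference of numerator and denominator factors as $(\sinh u - \cosh u)(1-\mu)\leq 0$ since $\cosh\geq \sinh$ and $\mu\leq 1$, so the ratio is at most $1$. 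If $|\mu|\geq 1$, I claim $F^\lambda_{-\kappa}(t)\leq \lambda$: cross-multiplying reduces the inequality to $\sqrt{\kappa}\sinh u \leq (\lambda^2/\sqrt{\kappa})\sinh u$, i.e.\ $\kappa \leq \lambda^2$, which is the hypothesis. A small point to verify is that the cross-multiplication is legitimate, i.e.\ that the denominator is positive on $[0,t_1]$; this follows from the Heintze-Karcher observation recorded after \eqref{eq:logestimate3} that the right-hand side of \eqref{eq:logestimate} is finite on this interval, so the comparison function $C_{-\kappa}(t)+\lambda_i(\xi)S_{-\kappa}(t)$ cannot vanish there.

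Summing the bound on $G_{-\kappa}(t)$ multiplied by $(\dimm-\dimn-1)$ with the termwise bounds $F^{\lambda_i(\xi)}_{-\kappa}(t) \leq \sqrt{\kappa}\,\mathbb{1}_{\{|\lambda_i(\xi)|<\sqrt{\kappa}\}} + \lambda_i(\xi)\,\mathbb{1}_{\{|\lambda_i(\xi)|\geq \sqrt{\kappa}\}}$ then recovers the stated inequality. There is no real obstacle here; the only delicate step is justifying the sign of the denominator in the $F^\lambda_{-\kappa}$ estimate, which is what prevents a purely algebraic argument from going wrong in the case $\lambda \leq -\sqrt{\kappa}$.
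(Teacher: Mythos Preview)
Your proposal is correct and follows the same overall strategy as the paper: apply \eqref{eq:logestimate} with $\kappa_\xi(t_1)=\underline{\kappa}_\xi(t_1)$, bound $G_{\underline{\kappa}_\xi(t_1)}(t)\le\sqrt{-\underline{\kappa}_\xi(t_1)}$, and bound each $F^{\lambda_i(\xi)}_{\underline{\kappa}_\xi(t_1)}(t)$ by either $\sqrt{-\underline{\kappa}_\xi(t_1)}$ or $\lambda_i(\xi)$ according to whether $|\lambda_i(\xi)|$ is small or large. The only real difference is in how the bound on $F^\lambda_\kappa$ is verified: the paper computes $-\tfrac{d}{dt}F^\lambda_\kappa(t)=(\kappa+\lambda^2)/(C_\kappa(t)+\lambda S_\kappa(t))^2$, reads off monotonicity from the sign of $\kappa+\lambda^2$, and then evaluates the limits at $0$ or $\infty$, whereas you cross-multiply and reduce to the elementary inequalities $(\sinh u-\cosh u)(1-\mu)\le 0$ and $\kappa\le\lambda^2$. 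Both arguments need the positivity of $C_\kappa(t)+\lambda S_\kappa(t)$ on $[0,t_1]$, which you correctly extract from the finiteness remark following \eqref{eq:logestimate3}; your route is marginally more elementary, while the paper's monotonicity argument makes the equality cases (the limits at $0$ and $\infty$) more transparent.
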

\begin{proof}
Fix $\kappa <0$ and $\lambda \in \mathbb{R}$. Note that $\lim_{t \downarrow 0} \left(\coth(t)-1/t\right) = 0$, $\lim_{t \uparrow \infty} \left(\coth(t)-1/t\right) = 1$ and that by Taylor's theorem the derivative of this function is strictly positive for positive $t$. Therefore $\coth(t)-1/t \leq 1$ for $t \in (0,\infty)$ and $G_{\kappa} (t) \leq \sqrt{-\kappa}$. Note that for each $i=1,\ldots,k$ we have
\begin{equation*}
-\frac{d}{dt} F_{\kappa}^{\lambda} (t) = \frac{{\kappa}+ \lambda^2}{(C_{\kappa} (t) + \lambda S_{\kappa} (t))^2}
\end{equation*}
so $F_{\kappa}^{\lambda} $ is increasing on $\left( 0, t_1 \right]$ if and only if $\vert \lambda \vert < \sqrt{-\kappa}$. If $\vert \lambda \vert \geq \sqrt{-\kappa}$ then $F^{\kappa}_{\lambda}$ is nonincreasing and $F^{\kappa}_{\lambda} (t) \leq \lim_{t \downarrow 0} F^{\kappa}_{\lambda} (t) = \lambda$. Conversely if $\vert \lambda \vert < \sqrt{-\kappa}$ then
\begin{equation*}
C_{\kappa} (t) +\lambda  S_{\kappa} (t) \geq \cosh \left(\sqrt{-\kappa}t\right)- \sinh \left(\sqrt{-\kappa}t\right) = e^{-\sqrt{-{\kappa}}t}
\end{equation*}
so $F^{\kappa}_{\lambda} $ is defined on $(0, \infty)$ and
\begin{equation*}
F^{\kappa}_{\lambda } (t) \leq \lim_{t \uparrow \infty} F^{\kappa}_{\lambda } (t) \leq \sqrt{-{\kappa}} \lim_{t \uparrow \infty} \left( \frac{\sinh(t)+\cosh(t)}{\cosh(t)-\sinh(t)}\right) = \sqrt{-{\kappa}}.
\end{equation*}
The lemma then follows from inequality \eqref{eq:logestimate} by setting $\secur_{\xi}(t_1)=\underline{\secur}_\xi(t_1)$.
\end{proof}
We can now prove Propostion \ref{prop:propos}.
\begin{proof}[Proof of Proposition \ref{prop:propos}]
By Lemmas \ref{lem:Case1} and \ref{lem:Case3} it follows that
\begin{eqnarray}
& & \frac{d}{dt} \log \theta_N (t\xi) \nonumber \\[0.2cm]
&\leq&   \sum_{i=1}^\dimn \lambda_i(\xi) \mathbb{1}_{\{{\underline{\secur}_\xi(t_1)} \geq 0 \}} + (\dimm-\dimn-1)\sqrt{-{\underline{\secur}_\xi(t_1)}} \mathbb{1}_{\{{\underline{\secur}_\xi(t_1)} < 0\}}  \nonumber \\
& & + \sum_{i=1}^{\dimn} \left( \sqrt{-\underline{\secur}_\xi(t_1)}\mathbb{1}_{\{ \vert \lambda_i(\xi) \vert < \sqrt{-\underline{\secur}_\xi(t_1)}\}}  + \lambda_i(\xi) \mathbb{1}_{\{\vert \lambda_i(\xi) \vert \geq \sqrt{-{\underline{\secur}_\xi(t_1)}} \}} \right) \mathbb{1}_{\{{\underline{\secur}_\xi(t_1)} < 0\}} \nonumber \\[0.2cm]
&\leq&   \sum_{i=1}^\dimn \vert\lambda_i(\xi)\vert \mathbb{1}_{\{{\underline{\secur}_\xi(t_1)} \geq 0 \}} + (\dimm-\dimn-1)\sqrt{\vert{\underline{\secur}_\xi(t_1) \wedge 0}\vert}  \nonumber \\
& & + \dimn \sqrt{\vert{\underline{\secur}_\xi(t_1) \wedge 0}\vert} + \sum_{i=1}^{\dimn} \vert\lambda_i(\xi)\vert \mathbb{1}_{\{{\underline{\secur}_\xi(t_1)} < 0\}} \nonumber \\[0.2cm]
&=& (\dimm-1)\sqrt{\vert{\underline{\secur}_\xi(t_1) \wedge 0}\vert} +\sum_{i=1}^\dimn \vert \lambda_i(\xi)\vert \nonumber
\end{eqnarray}
as required.
\end{proof}
Note that the factor $(\dimm-1)$ is reasonable since an orthonormal basis of a tangent space $T_{\gamma_{\xi}}M$ gives rise to precisely $(\dimm-1)$ orthogonal planes containing the radial direction $\dot{\gamma_\xi}$.

\subsection{Laplacian Inequalities}\label{ss:lapineq}

Denote by $\mathcal{M}(N)$ the largest domain in $TN^\bot$ whose fibres are star-like and such that $\exp_N \vert_{\mathcal{M}(N)}$ is a diffeomorphism onto its image. Then that image is $M\setminus \Cut(N)$, where $\Cut(N)$ denotes the cut locus of $N$. Recall that $\Cut(N)$ is a closed subset of $M$ with $\vol_M$-measure zero. With $r_N:M\rightarrow \mathbb{R}$ defined by $r_N(\cdot) := d_M(\cdot,N)$ the vector field $\frac{\partial}{\partial r_N}$ will denote differentiation in the radial direction, which is defined off the union of $N$ and $\Cut(N)$ to be the gradient of $r_N$ and which is set equal to zero on that union. If as in \cite[p.146]{MR2024928} we define a function $\Theta_N : M\setminus \Cut(N) \rightarrow \mathbb{R}$ by
\begin{equation}\label{eq:Thetadefn}
\Theta_N :=\theta_N \circ {\left(\exp_N \vert_{\mathcal{M}(N)}\right)}^{-1}
\end{equation}
we then have the following corollary of Proposition \ref{prop:propos}, in which $c_N(\xi)$ denotes the distance to the cut locus along $\gamma_\xi$.
\begin{corollary}\label{cor:corone}
Suppose that there is a function $\secur : \left[0,\infty\right) \rightarrow \mathbb{R}$ such that for each $\xi \in UTN^\bot$ and $t_1 \in \left(0,c_N (\xi)\right)$ we have $\secur(t_1)\leq \underline{\secur}_\xi(t_1)$. Furthermore, suppose that the principal curvatures of $N$ are bounded in modulus by a constant $\Lambda\geq 0$. Then there is the estimate
\begin{equation}\label{eq:estinkappa1}
\frac{\partial}{\partial r_N}\log \Theta_N \leq \dimn \Lambda + (\dimm-1)\sqrt{\vert \secur(r_N)\wedge 0 \vert}
\end{equation}
on $M\setminus \Cut(N)$.
\end{corollary}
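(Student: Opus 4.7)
The plan is to deduce this corollary as a direct pointwise consequence of Proposition \ref{prop:propos}. Fix a point $p\in M\setminus\Cut(N)$. Since $\exp_N$ restricts to a diffeomorphism from $\mathcal{M}(N)$ onto $M\setminus\Cut(N)$, there is a unique $\xi\in UTN^\bot$ and a unique $t=r_N(p)\in(0,c_N(\xi))$ such that $p=\exp_N(t\xi)$. I would then observe that, directly from the definition \eqref{eq:Thetadefn} of $\Theta_N$ and the fact that $\frac{\partial}{\partial r_N}$ corresponds under $\exp_N$ to differentiation in the radial direction on $TN^\bot$, one has the identification
\begin{equation*}
\frac{\partial}{\partial r_N}\log\Theta_N(p) \;=\; \frac{d}{ds}\log\theta_N(s\xi)\bigg|_{s=t}.
\end{equation*}

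Next I would apply Proposition \ref{prop:propos} with $t_1=t=r_N(p)$, which is admissible because $t<c_N(\xi)\leq f_N(\xi)$. This yields
\begin{equation*}
\frac{\partial}{\partial r_N}\log\Theta_N(p) \;\leq\; (\dimm-1)\sqrt{\vert\underline{\secur}_\xi(r_N(p))\wedge 0\vert} \;+\; \sum_{i=1}^{\dimn}\vert\lambda_i(\xi)\vert.
\end{equation*}
The hypothesis $\secur(t_1)\leq\underline{\secur}_\xi(t_1)$ implies $\vert\underline{\secur}_\xi(t_1)\wedge 0\vert\leq\vert\secur(t_1)\wedge 0\vert$ (checked by splitting into the three sign cases), and taking square roots gives the first term in the desired bound. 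The extrinsic term is handled by the pointwise estimate $\vert\lambda_i(\xi)\vert\leq\Lambda$, so $\sum_{i=1}^{\dimn}\vert\lambda_i(\xi)\vert\leq\dimn\Lambda$, yielding the advertised inequality.

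There is no real obstacle here: the content is Proposition \ref{prop:propos}, and the work is bookkeeping — verifying that the radial derivative transports correctly through $\exp_N\vert_{\mathcal{M}(N)}^{-1}$ and that the monotonicity $\secur\leq\underline{\secur}_\xi$ survives the $\vert\cdot\wedge 0\vert$ and square root operations. If anything deserves a brief comment in the write-up, it is the case analysis confirming that $x\leq y$ implies $\vert x\wedge 0\vert\geq\vert y\wedge 0\vert$, which is the one place where one might otherwise flip an inequality by mistake.
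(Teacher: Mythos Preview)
Your approach is essentially identical to the paper's: identify $\frac{\partial}{\partial r_N}\log\Theta_N$ at $p$ with $\frac{d}{dt}\log\theta_N(t\xi)$ at $t=r_N(p)$, apply Proposition~\ref{prop:propos}, then use $\kappa(t_1)\leq\underline{\kappa}_\xi(t_1)$ and $\vert\lambda_i(\xi)\vert\leq\Lambda$. The only omission is the case $p\in N$: your opening line picks $t=r_N(p)\in(0,c_N(\xi))$, which tacitly excludes points of $N$, yet the claimed inequality is stated on all of $M\setminus\Cut(N)$. The paper handles this separately in one line, noting that $\frac{\partial}{\partial r_N}$ is \emph{defined} to be zero on $N$, so the inequality is trivial there; you should add that remark.
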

\begin{proof}
For each $\xi \in UTN^{\bot}$ and $t_1 \in \left(0,c_N (\xi)\right)$ we see by Proposition \ref{prop:propos} that
\begin{equation*}
\frac{\partial}{\partial r_N}\log \Theta_N (\gamma_\xi (t_1))=\frac{d}{d t}\log \theta_N(t\xi)\bigg\vert_{t=t_1} \leq \dimn \Lambda+(\dimm-1)\sqrt{\vert \secur(t_1)\wedge 0 \vert}.
\end{equation*}
Since for each $p \in M\setminus (N\cup \Cut(N))$ there exists a unique $\xi_p \in UTN^{\bot}$ such that $\gamma_{\xi_p}(r_N(p))= p$, the result follows for such $p$ by setting $t_1 = r_N(p)$. For $p\in N$ the radial derivative is set equal to zero in which case the result is trivial.
\end{proof}
Furthermore, following from remarks made at the end of Subsection \ref{ss:hkest}, if there is a function $\rho : \left[0,\infty\right) \rightarrow \mathbb{R}$ such that for each $\xi \in UTN^\bot$ and $t_1 \in \left(0,c_N (\xi)\right)$ we have $ \rho(t_1)\leq \underline{\rho}_\xi(t_1)$ then for $\dimn=0$ there is the estimate
\begin{equation}\label{eq:estinkappa2}
\frac{\partial}{\partial r_N}\log \Theta_N \leq (\dimm-1)\sqrt{\vert \rho(r_N)\wedge 0 \vert}
\end{equation}
on $M \setminus \Cut(N)$ and for $\dimn=\dimm-1$ with $\vert H_\xi \vert \leq \Lambda$ for each $\xi \in UTN^\bot$ there is the estimate
\begin{equation}\label{eq:estinkappa3}
\frac{\partial}{\partial r_N}\log \Theta_N \leq (\dimm-1)\left(\sqrt{\vert \rho(r_N)\wedge 0 \vert}+\Lambda\right)
\end{equation}
on $M \setminus \Cut(N)$. Thus we arrive at the main results of this section, which are the following theorem and its corollary.
\begin{theorem}\label{th:estimatefinal}
Suppose that $M$ is a complete and connected Riemannian manifold of dimension $\dimm$ and that $N$ is a closed embedded submanifold of $M$ of dimension $\dimn \in \lbrace 0,\ldots,\dimm-1\rbrace$. Denote by $\Cut(N)$ the cut locus of $N$ and $r_N$ the distance to $N$ and suppose that there exist constants $C_1,C_2 \geq 0$ such that one of the following conditions is satisfied on $M \setminus (\Cut(N)\cup N)$:
\begin{description}
\item[(C1)] the sectional curvatures of planes containing the radial direction are bounded below by $-(C_1 + C_2 r_N)^2$ and there exists a constant $\Lambda \geq 0$ such that the principal curvatures of $N$ are bounded in modulus by $\Lambda$;
\item[(C2)] $\dimn=0$ and the Ricci curvature in the radial direction is bounded below by $-(\dimm-1)(C_1 + C_2 r_N)^2$;
\item[(C3)] $\dimn=\dimm-1$ and the Ricci curvature in the radial direction is bounded below by $-(\dimm-1)(C_1 + C_2 r_N)^2$ and there exists a constant $\Lambda\geq 0$ such that the mean curvature of $N$ is bounded in modulus by $\Lambda$.
\end{description}
Then for $\Theta_N$, defined by \eqref{eq:Thetadefn}, we have the inequality
\begin{equation*}
\frac{\partial}{\partial r_N}\log \Theta_N \leq \dimn \Lambda + (\dimm-1)(C_1+C_2r_N)
\end{equation*}
where $\frac{\partial}{\partial r_N}$ denotes differentiation in the radial direction.
\end{theorem}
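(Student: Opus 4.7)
The plan is to show that each of the three hypotheses (C1), (C2) and (C3) reduces to an application of the pointwise radial derivative estimate already established in Corollary \ref{cor:corone}, or one of its two companions \eqref{eq:estinkappa2} and \eqref{eq:estinkappa3}, with a specific choice of lower-bound function. The key observation is that the square-root that appears in those three estimates collapses to a linear expression once the comparison function is taken to be $-(C_1+C_2 t)^2$, because this quantity is manifestly non-positive and therefore $\sqrt{\lvert -(C_1+C_2 t)^2 \wedge 0\rvert}=C_1+C_2 t$.

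For case (C1) I would define $\kappa\colon[0,\infty)\to\mathbb{R}$ by $\kappa(t):=-(C_1+C_2 t)^2$. The lower bound on sectional curvatures of planes containing the radial direction says precisely that for every $\xi\in UTN^\bot$ and every $t_1\in(0,c_N(\xi))$ we have $\kappa(t_1)\leq \underline{\kappa}_\xi(t_1)$, since $\underline{\kappa}_\xi(t_1)$ is the minimum of $K(\sigma_{\gamma_\xi(t)})$ over $t\in[0,t_1]$ and planes containing $\dot\gamma_\xi(t)$, and the bound $K\geq -(C_1+C_2 r_N)^2$ is monotone in $r_N$. Corollary \ref{cor:corone} then gives
\begin{equation*}
\frac{\partial}{\partial r_N}\log\Theta_N\leq n\Lambda+(m-1)\sqrt{\lvert\kappa(r_N)\wedge 0\rvert}=n\Lambda+(m-1)(C_1+C_2 r_N)
\end{equation*}
on $M\setminus\Cut(N)$, which is the desired inequality.

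For (C2) and (C3) I would argue analogously using $\rho(t):=-(C_1+C_2 t)^2$, which by definition of $\underline{\rho}_\xi(t_1)$ as a minimum of Ricci curvatures along $\gamma_\xi$ satisfies $\rho(t_1)\leq\underline{\rho}_\xi(t_1)$. In (C2) we have $n=0$, so the $n\Lambda$ term vanishes and \eqref{eq:estinkappa2} directly yields the claim. In (C3) we have $n=m-1$, and \eqref{eq:estinkappa3} gives
\begin{equation*}
\frac{\partial}{\partial r_N}\log\Theta_N\leq(m-1)\bigl(\sqrt{\lvert\rho(r_N)\wedge 0\rvert}+\Lambda\bigr)=n\Lambda+(m-1)(C_1+C_2 r_N),
\end{equation*}
as required.

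There is no real obstacle here once one has Corollary \ref{cor:corone} and the two hypersurface/one-point variants; the only thing to verify carefully is that $\kappa$ (or $\rho$) is a valid lower bound in the sense required by those statements, which is immediate from the monotonicity of $r\mapsto-(C_1+C_2 r)^2$ and the fact that the Ricci and sectional curvature hypotheses are formulated in terms of $r_N$ evaluated along the geodesic $\gamma_\xi$. The only mildly subtle point, worth remarking on, is the simplification of the square root, which is what makes the quadratic-in-$r_N$ lower bound the natural shape of the hypotheses: it yields exactly a linear upper bound on the radial logarithmic derivative of $\Theta_N$.
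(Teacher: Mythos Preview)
Your proposal is correct and matches the paper's approach exactly: the paper presents Theorem \ref{th:estimatefinal} as an immediate consequence of Corollary \ref{cor:corone} and the companion estimates \eqref{eq:estinkappa2}, \eqref{eq:estinkappa3}, obtained by specializing the lower-bound function to $-(C_1+C_2 t)^2$ so that the square root collapses to $C_1+C_2 r_N$. You have simply written out the details that the paper leaves implicit in the word ``Thus''.
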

Note that if $\dimn=0$ then the mean curvature is not relevant and if $\dimm=1$ then the sectional curvatures are not relevant but that the above estimates still make sense in either of these cases. Recall that we are primarily interested in the Laplacian of the distance function. If $\triangle$ denotes the Laplace-Beltrami operator on $M$ then, as shown in \cite{MR2024928}, there is the formula
\begin{equation*}
\frac{1}{2}\triangle r_N^2 = (\dimm-\dimn) +r_N \frac{\partial}{\partial r_N} \log \Theta_N
\end{equation*}
on $M \setminus \Cut(N)$. This yields the following corollary.
\begin{corollary}\label{cor:maincor}
Under the conditions of Theorem \ref{th:estimatefinal} we have
\begin{equation}\label{eq:simpleestimate1}
\frac{1}{2}\triangle r_N^2 \leq (\dimm-\dimn) + (\dimn \Lambda + (\dimm-1)C_1)r_N +(\dimm-1)C_2r_N^2
\end{equation}
on $M\setminus \Cut(N)$.
\end{corollary}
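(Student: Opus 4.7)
The statement is essentially an algebraic consequence of Theorem \ref{th:estimatefinal} together with the representation formula for $\tfrac{1}{2}\triangle r_N^2$ in terms of the radial logarithmic derivative of $\Theta_N$ that is quoted immediately before the corollary. So my plan is to simply combine these two ingredients and keep track of signs.

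First I would fix a point $p \in M\setminus \Cut(N)$ and write down the identity
\begin{equation*}
\frac{1}{2}\triangle r_N^2(p) = (\dimm-\dimn) + r_N(p) \,\frac{\partial}{\partial r_N}\log \Theta_N(p),
\end{equation*}
recalled from \cite{MR2024928}. Since $r_N \geq 0$ everywhere, the bound
\begin{equation*}
\frac{\partial}{\partial r_N}\log \Theta_N(p) \leq \dimn \Lambda + (\dimm-1)\bigl(C_1 + C_2 r_N(p)\bigr)
\end{equation*}
supplied by Theorem \ref{th:estimatefinal} can be multiplied through by $r_N(p)$ without reversing the inequality, giving
\begin{equation*}
r_N(p)\,\frac{\partial}{\partial r_N}\log \Theta_N(p) \leq \bigl(\dimn\Lambda + (\dimm-1)C_1\bigr) r_N(p) + (\dimm-1)C_2 r_N^2(p).
\end{equation*}
Adding the constant $(\dimm-\dimn)$ to both sides and using the identity above yields the claimed estimate at $p$, and since $p \in M\setminus\Cut(N)$ was arbitrary this proves the corollary.

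There is no real obstacle here: the only things one has to be careful about are that the formula for $\tfrac{1}{2}\triangle r_N^2$ is valid precisely off the cut locus (which is the set on which the conclusion is stated), that $r_N \geq 0$ so multiplication preserves the inequality, and that the three scenarios \textbf{(C1)}--\textbf{(C3)} in Theorem \ref{th:estimatefinal} all produce the same form of radial bound, so one case analysis suffices. If one wished, one could also remark that on $N$ itself both sides are well-defined (with $r_N = 0$) and the inequality reduces to $\tfrac{1}{2}\triangle r_N^2 \leq (\dimm-\dimn)$, consistent with the direct computation.
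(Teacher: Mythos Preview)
Your argument is correct and is exactly the approach the paper takes: the corollary is stated immediately after the formula $\tfrac{1}{2}\triangle r_N^2 = (\dimm-\dimn) + r_N\,\frac{\partial}{\partial r_N}\log\Theta_N$ with the remark ``This yields the following corollary,'' and no further proof is given. You have simply written out the one-line substitution in detail.
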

For the particular case in which $N$ is a point $p$, it was proved by Yau in \cite{MR0417452} that if the Ricci curvature is bounded below by a constant $R$ then the Laplacian of the distance function $r_p$ is bounded above by $(\dimm-1)/r_p$ plus a constant depending on $R$. Yau then used analytic techniques in \cite{MR505904} to prove that this bound implies the stochastic completeness of $M$. A relaxation of Yau's condition which allows the curvature to grow like a negative quadratic in the distance function is essentially optimal from the point of view of curvature and nonexplosion; this is why we did not feel it necessary to present Theorem \ref{th:estimatefinal} in terms of a general growth function, although one certainly could. We will return to this matter in Subsection \ref{ss:exittimes}. If in Yau's example we set $(\dimm-1)\varrho = R$ then inequality \eqref{eq:logestimate2} and Taylor expansions imply $G^\varrho(t)\leq -\varrho t/3$ for all $t\geq 0$ if $\varrho \leq 0$ or for $t \in [0,\frac{\pi}{\sqrt{\varrho}})$ if $\varrho >0$, which yields the simple estimate
\begin{equation*}
\frac{\partial}{\partial r_p}\log \Theta_p \leq -\frac{Rr_p}{3}
\end{equation*}
on $M\setminus \Cut(p)$. This has the advantage of taking into account the effect of positive curvature and in turn yields the Laplacian estimate
\begin{equation}\label{eq:simpleestimate1R}
\frac{1}{2}\triangle r_p^2 \leq \dimm -\frac{Rr_p^2}{3}
\end{equation}
on $M\setminus \Cut(p)$, which is different to Yau's bound. Note that inequalities \eqref{eq:simpleestimate1} and \eqref{eq:simpleestimate1R} actually hold on the whole of $M$ in the sense of distributions.

\section{Local Time}\label{se:section2}

In this section we show how the distance function relates to the local time of Brownian motion on a hypersurface. Since the boundaries of regular domains are included as examples, this could yield applications related to the study of reflected Brownian motion. The main references here are \cite{MR1314177}, \cite{MR1489143} and \cite{MR1231929}. The articles \cite{MR1314177} and \cite{MR1489143} approach geometric local time in the general context of continuous semimartingales from the point of view of Tanaka's formula while \cite{MR1231929} approaches the topic for the special case of Brownian motion using Markov process theory.

\subsection{It\^{o}-Tanaka Formula}\label{ss:itoformula}

Suppose that $X$ is a Brownian motion on $M$ defined up to an explosion time $\zeta$ and that $U$ is a horizontal lift of $X$ to the orthonormal frame bundle with antidevelopment $B$ on $\mathbb{R}^\dimm$. In \cite{MR1941909} it was proved, using the theory of viscosity solutions, that the cut locus of $N$ is given by the disjoint union of two subsets $\mathring{C}(N)$ and $\check{C}(N)$ where the connected components of $\mathring{C}(N)$, of which there are at most countably many, are smooth two-sided $(\dimm-1)$-dimensional submanifolds of $M$ and where $\check{C}(N)$ is a closed subset of $M$ of Hausdorff dimension at most $\dimm-2$ (and therefore polar for $X$ by \cite{MR771826}). Also $\mathring{C}(N)\cup N$ has $\vol_M$-measure zero so it follows that
\begin{equation*}
\int_0^\cdot \bigg\langle \frac{\partial}{\partial r_N},U_s dB_s\bigg\rangle= \beta_\cdot
\end{equation*}
where $\beta$ is a standard one-dimensional Brownian motion, by L\'{e}vy's characterization and the fact that $U$ consists of isometries. Furthermore, points belonging to $\mathring{C}(N)$ can be connected to $N$ by precisely \textit{two} length-minimizing geodesic segments, both of which are non-focal. Using these observations, it follows from \cite[Theorem~1]{MR1314177} that $r_N(X)$ is a continuous semimartingale. In particular, if $\tau$ is a stopping time with $0\leq\tau<\zeta$ then there exist continuous adapted nondecreasing and nonnegative processes $L^{N}(X)$ and $L^{\mathring{C}(N)}(X)$, whose associated random measures are singular with respect to Lebesgue measure and supported when $X$ takes values in $N$ and $\mathring{C}(N)$, respectively, such that
\begin{equation}\label{eq:rNFormulaforBM}
{r_N}(X_{t\wedge \tau}) = {r_N}(X_0) + \beta_{t\wedge \tau} + \frac{1}{2}\int_0^{t\wedge \tau}\triangle r_N(X_s) ds - \mathbb{L}^{\Cut(N)}_{t\wedge \tau}(X) + L^N_{t\wedge \tau}(X)
\end{equation}
for all $t\geq 0$, almost surely, where
\begin{equation*}
d\mathbb{L}^{\Cut(N)}(X) :=\frac{1}{2}\left( D^-_X - D^+_X\right) {r_N}(\mathbf{n}) dL^{\mathring{C}(N)}(X).
\end{equation*}
Here $\mathbf{n}$ is \textit{any} unit normal vector field on $\mathring{C}(N)$ and the G\^{a}teaux derivatives $D^\pm {r_N}$ are defined for $z \in \mathring{C}(N)$ and $v \in T_zM$ by
\begin{equation*}
D^+_z {r_N}(v) := \lim_{\epsilon \downarrow 0} \frac{1}{\epsilon} \left(f(\exp_z (\epsilon v)) - {r_N}(z)\right) 
\end{equation*}
and $D^-_z {r_N}(v) := -D^+_z {r_N}(-v)$. A detailed explanation of precisely how formula \eqref{eq:rNFormulaforBM} is derived from \cite[Theorem~1]{MR1314177} can be found in the author's doctoral thesis. Note that the integral appearing in formula (\ref{eq:rNFormulaforBM}) is well-defined since the set of times when $X \in N \cup \Cut(N)$ has Lebesgue measure zero. The process $L^{\mathring{C}(N)}(X)$ is given by the local time of $d(X,\mathring{C}(N))$ at zero so long as the latter makes sense, while for the one-point case the process $\mathbb{L}^{\Cut(N)}(X)$ coincides with the geometric local time introduced in \cite{MR1231929}. The process $L^{N}(X)$, which we will refer to as \textit{the local time of $X$ on $N$}, satisfies
\begin{equation*}
L^{N}(X)=L^0(r_N(X))
\end{equation*}
where $L^0(r_N(X))$ denotes the (symmetric) local time of the continuous semimartingale $r_N(X)$ at zero. It follows that if $\dimn \leq \dimm-2$ then the local time of $X$ on $N$ vanishes. In fact, if $X$ is nonexplosive with $\dimn=\dimm-1$ then, since $r_N(X)$ is a continuous semimartingale with $d[r_N (X)]_s=ds$, it follows that
\begin{equation}\label{eq:otamfd}
L^N_{t\wedge \tau} (X)=\lim_{\epsilon \downarrow 0} \frac{1}{2\epsilon} \int_0^{t\wedge \tau} \mathbf{1}_{B_\epsilon(N)} (X_s) \,ds
\end{equation}
for all $t\geq 0$, almost surely. Note that the Brownian motion considered here can be replaced with a Brownian motion with locally bounded and measurable drift.

\subsection{Revuz Measure}\label{ss:revuzmeas}

In this subsection we consider an application of a different approach, based on the theory of Markov processes. Although it is not always necessary to do so, we will assume that $M$ is compact. We will also assume that $N$ is a closed embedded hypersurface and that $X(x)$ is a Brownian motion on $M$ starting at $x$. The convexity based argument of \cite{MR1231929}, which was applied to the one point case, can be adapted to our situation and implies that with respect to the invariant measure $\vol_M$ the local time $L^N(X(x))$ corresponds in the sense of \cite{MR0279890} to the induced measure $\vol_N$. By \cite{MR1278079} this implies the following theorem, in which $p^M$ denotes the transition density function for Brownian motion.
\begin{theorem}\label{th:revuzimpthm}
Suppose that $M$ is compact, that $N$ is a closed embedded hypersurface and that $X(x)$ is a Brownian motion on $M$ starting at $x$. Then
\begin{equation}\label{eq:revuzimp}
\mathbb{E}\left[L^N_t(X(x))\right]= \int_0^t \int_N p_s^M(x,y)d\vol_N(y)\,ds
\end{equation}
for all $t\geq0$.
\end{theorem}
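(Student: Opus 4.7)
The plan is to realise $L^N(X)$ as a positive continuous additive functional (PCAF) of the symmetric process $(X,\vol_M)$, to identify its Revuz measure, and then to quote the standard expectation formula for PCAFs. The It\^o--Tanaka formula \eqref{eq:rNFormulaforBM} already exhibits $L^N(X)$ as continuous, adapted and nondecreasing with supporting measure concentrated on $N$; the additive-functional property then follows from the strong Markov property of $X$, so $L^N(X)$ is a PCAF of Brownian motion on compact $M$.

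The substantive step is to show that the Revuz measure $\mu_N$ of $L^N(X)$ with respect to the invariant measure $\vol_M$ coincides with the induced hypersurface measure $\vol_N$. I would test against a continuous function $f$ on $M$ using the occupation-time approximation \eqref{eq:otamfd} together with the invariance of $\vol_M$: by Fubini,
\begin{equation*}
\mathbb{E}_{\vol_M}\!\left[\int_0^t f(X_s)\, dL^N_s(X)\right] = \lim_{\epsilon\downarrow 0}\frac{1}{2\epsilon}\int_0^t \mathbb{E}_{\vol_M}\!\bigl[f(X_s)\mathbf{1}_{B_\epsilon(N)}(X_s)\bigr]\, ds = \lim_{\epsilon\downarrow 0}\frac{t}{2\epsilon}\int_{B_\epsilon(N)}\! f\, d\vol_M,
\end{equation*}
and the classical tubular-volume / coarea formula gives $\lim_{\epsilon\downarrow 0}\tfrac{1}{2\epsilon}\int_{B_\epsilon(N)} f\, d\vol_M = \int_N f\, d\vol_N$. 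Dividing by $t$ recovers $\int_N f\, d\vol_N$ independently of $t$, which matches the defining weak characterisation of the Revuz measure; hence $\mu_N = \vol_N$. This is the hypersurface analogue of the convexity-based identification carried out in \cite{MR1231929} for the one-point case, where the semimartingale structure from the It\^o--Tanaka formula plays the role that convexity plays there.

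With $\mu_N$ identified, the standard formula from \cite{MR1278079} relating a PCAF of a symmetric Markov process to its Revuz measure through the transition density yields
\begin{equation*}
\mathbb{E}\bigl[L^N_t(X(x))\bigr] = \int_0^t \int_M p_s^M(x,y)\, d\mu_N(y)\, ds,
\end{equation*}
and substituting $\mu_N = \vol_N$ produces \eqref{eq:revuzimp}. The main technical obstacle I anticipate lies in justifying the interchange of the $\epsilon \downarrow 0$ limit with the expectation when applying \eqref{eq:otamfd} under $\mathbb{E}_{\vol_M}$: compactness of $M$ together with the uniform Jacobian bound from Section~\ref{ss:jacineq} should supply enough control for dominated convergence. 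A cleaner alternative is to avoid the pointwise approximation altogether and verify the weak Revuz characterisation of $\mu_N$ directly, so that the identification $\mu_N = \vol_N$ requires only integrated tubular-volume asymptotics rather than a limit under the expectation.
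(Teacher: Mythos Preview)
Your two-step architecture --- identify the Revuz measure of $L^N(X)$ as $\vol_N$, then invoke \cite{MR1278079} --- is exactly the paper's. The divergence is in the first step: the paper does \emph{not} use the occupation-time approximation \eqref{eq:otamfd} to pin down the Revuz measure, but instead adapts the convexity-based argument of Cranston--Kendall--March \cite{MR1231929} from the one-point case to the hypersurface case. In fact the paper explicitly remarks, just after the theorem, that the route you propose would give an alternative proof \emph{if} one could justify exchanging the $\epsilon\downarrow 0$ limit with the expectation --- precisely the obstacle you flag.

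So your proposal is not wrong, but the hard step is left open. Your appeal to ``compactness of $M$ together with the uniform Jacobian bound'' for dominated convergence is not convincing as stated: the approximants $\tfrac{1}{2\epsilon}\int_0^t \mathbf{1}_{B_\epsilon(N)}(X_s)\,ds$ are not obviously dominated uniformly in $\epsilon$ by an integrable random variable, and the Jacobian bounds of Section~\ref{ss:jacineq} control tubular \emph{volumes}, not pathwise occupation times. Also, \eqref{eq:otamfd} gives convergence of $L^N_t$, not of $\int_0^t f(X_s)\,dL^N_s$ against a test function, so your first displayed identity needs an extra argument. Your ``cleaner alternative'' --- verifying the weak Revuz characterisation directly via integrated tubular asymptotics --- is closer in spirit to what actually works, but you would still need to show that $t^{-1}\mathbb{E}_{\vol_M}\bigl[\int_0^t f(X_s)\,dL^N_s\bigr]$ equals $\int_N f\,d\vol_N$, and the semimartingale/convexity machinery of \cite{MR1231929} is one concrete way to do that without the problematic interchange.
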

In a subsequent article we will calculate, estimate and provide an asymptotic relation for the rate of change $\frac{d}{dt}\mathbb{E}\left[L^N_t(X(x))\right]$. By a standard change of variables, it follows that the expected value of the occupation times appearing inside the limit on the right-hand side of \eqref{eq:otamfd} converge to the right-hand side of \eqref{eq:revuzimp} as $\epsilon \downarrow 0$. If one could justify exchanging this limit with the expectation, one would obtain a different proof of Theorem \ref{th:revuzimpthm}. The following corollary follows directly from Theorem \ref{th:revuzimpthm} and basic ergodicity properties of Brownian motion.
\begin{corollary}\label{cor:largetime}
Suppose that $M$ is compact, that $N$ is a closed embedded hypersurface and that $X$ is a Brownian motion on $M$. Then
\begin{equation*}
\lim_{t \uparrow \infty} \frac{1}{t}\mathbb{E}\left[L^N_t(X)\right] = \frac{\vol_N(N)}{\vol_M(M)}.
\end{equation*}
\end{corollary}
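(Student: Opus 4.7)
The statement is essentially a Cesàro-type consequence of Theorem \ref{th:revuzimpthm} together with the ergodicity of Brownian motion on a compact Riemannian manifold. The plan has three steps.

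First, fix any starting point $x \in M$ and apply Theorem \ref{th:revuzimpthm} to write
\begin{equation*}
\frac{1}{t}\mathbb{E}\left[L^N_t(X(x))\right] = \frac{1}{t}\int_0^t \left(\int_N p_s^M(x,y)\,d\vol_N(y)\right) ds.
\end{equation*}
If $X$ has a more general initial distribution, one integrates this identity against that distribution; by bounded convergence and compactness of $M$, it suffices to treat the case of a fixed starting point.

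Second, invoke the spectral decomposition of the heat kernel on the compact manifold $M$. Writing $p_s^M(x,y) = \sum_{k\geq 0} e^{-\lambda_k s}\phi_k(x)\phi_k(y)$ with eigenvalues $0 = \lambda_0 < \lambda_1 \leq \cdots$ and normalized eigenfunctions $\phi_k$, one has $\phi_0 \equiv 1/\sqrt{\vol_M(M)}$, so that the $k=0$ term equals $1/\vol_M(M)$ and the remainder decays like $e^{-\lambda_1 s}$ uniformly in $(x,y) \in M \times M$. Integrating over $y \in N$ against $\vol_N$ therefore yields
\begin{equation*}
\int_N p_s^M(x,y)\,d\vol_N(y) \longrightarrow \frac{\vol_N(N)}{\vol_M(M)}
\end{equation*}
as $s \uparrow \infty$, again uniformly in $x$.

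Third, apply the Cesàro lemma: whenever a bounded measurable $f:[0,\infty)\to\mathbb{R}$ satisfies $f(s) \to L$ as $s \to \infty$, one has $t^{-1}\int_0^t f(s)\,ds \to L$. Taking $f(s) := \int_N p_s^M(x,y)\,d\vol_N(y)$, which is bounded and converges to $\vol_N(N)/\vol_M(M)$ by the previous step, gives the corollary for starting point $x$; uniformity in $x$ then transfers the conclusion to any initial law.

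None of the steps presents a genuine difficulty: the only nontrivial input, uniform convergence of $p_s^M$ to its equilibrium value, is standard on a compact manifold via the discrete spectrum of $\triangle$, and everything else is routine integration and the elementary Cesàro argument.
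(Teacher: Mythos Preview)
Your argument is correct and matches the paper's one-line proof: Theorem~\ref{th:revuzimpthm} combined with the ergodicity of Brownian motion on a compact manifold, which you have simply made explicit via the spectral gap and a Ces\`aro average. The only quibble is that $f(s)=\int_N p_s^M(x,y)\,d\vol_N(y)$ need not be bounded near $s=0$ when $x\in N$; but it is locally integrable on $[0,t]$ by Theorem~\ref{th:revuzimpthm} itself, and local integrability together with $f(s)\to L$ is all the Ces\`aro step actually needs.
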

\begin{example}
Suppose $M=\mathbb{S}^1$ (i.e. the unit circle with the standard metric) and let $X(x)$ be a Brownian motion starting at $x \in \mathbb{S}^1$. By formula \eqref{eq:rNFormulaforBM} it follows that
\begin{equation*}
r^2_x(X_t(x)) = r_x^2(x) + 2\int_0^t r_x(X_s(x))d\beta_s + t-2\int_0^t r_x(X_s(x)) dL^{\Cut(x)}_s(X(x))
\end{equation*}
for $t \geq 0$, where $\beta$ is a standard one-dimensional Brownian motion. But $r_x(x)=0$ and $\Cut(x)$ is antipodal to $x$, which is a distance $\pi$ away from $x$, so as $dL^{\Cut(x)}(X(x))$ is supported on $\lbrace s\geq0 : X_s = \Cut(x)\rbrace$ we deduce that
\begin{equation}\label{eq:eqN1}
r^2_x(X_t(x)) = 2\int_0^t r_x(X_s(x))d\beta_s + t-2\pi L^{\Cut(x)}_t(X(x))
\end{equation}
for $t \geq 0$. Now $p^{\mathbb{S}^1}_t(x,\cdot) \rightarrow (2\pi)^{-1}$ as $t\uparrow \infty$ so
\begin{equation}\label{eq:eqN2}
\lim_{t \uparrow \infty} \mathbb{E}\left[r^2_x(X_t(x))\right] = \int_{\mathbb{S}^1}  \frac{r^2_x(y)}{2\pi} d \vol_{\mathbb{S}^1}(y) = \int_{-\pi}^{\pi}\frac{v^2}{2\pi}dv = \frac{\pi^2}{3}.
\end{equation}
Thus by equations \eqref{eq:eqN1} and \eqref{eq:eqN2} it follows that
\begin{equation*}
\frac{\pi^2}{3} = \lim_{t \uparrow \infty} \left(t-2\pi \mathbb{E}[L^{\Cut(x)}_t(X(x))]\right)
\end{equation*}
which implies for large times $t$ the approximation
\begin{equation*}
\mathbb{E} [L^{\Cut(x)}_t(X(x))] = \frac{t}{2 \pi}-\frac{\pi}{6}+o(1).
\end{equation*}
\end{example}
Corollary \ref{cor:largetime} and the previous example concern the behaviour of $\mathbb{E}\left[L^N(X)\right]$ for large times, in the compact case. In the next example we fix the time, instead considering the effect of expanding size of the submanifold, using balls in Euclidean space as the example.
\begin{example}
For $r>0$ denote by $\mathbb{S}^{\dimm-1}(r)$ the boundary of the open ball in $\mathbb{R}^\dimm$ of radius $r$ centred at the origin. If $X$ is a Brownian motion on $\mathbb{R}^\dimm$ starting at the origin then $r_{\mathbb{S}^{\dimm-1}(r)}(X)$ is a Markov process. It follows from general theory, as in \cite{MR1278079}, that formula (\ref{eq:revuzimp}) holds with $N=\mathbb{S}^{\dimm-1}(r)$ and $M=\mathbb{R}^\dimm$ and therefore
\begin{equation*}
\frac{1}{r}\mathbb{E}\left[L^{ \mathbb{S}^{\dimm-1}(r)}_t(X)\right] = \frac{\Gamma\left(\frac{\dimm}{2}-1,\frac{r^2}{2t}\right)}{\Gamma\left(\frac{\dimm}{2}\right)}
\end{equation*}
where $\Gamma(a,b) = \int_b^{\infty} s^{a-1}\,e^{-s}\,ds$ is the upper incomplete Gamma function. In this setting the process $L^{ \mathbb{S}^{\dimm-1}(r)}(X)$ corresponds to the local time of an Bessel process of dimension $\dimm$, started at the origin, at the value $r$. In particular, for the case $\dimm=2$ we obtain
\begin{equation}\label{eq:eulermaspre}
\frac{1}{r}\mathbb{E}\left[L^{ \mathbb{S}^{1}(r)}_t(X)\right] = \Gamma\left(0,\frac{r^2}{2t}\right).
\end{equation}
By differentiating the exponential of the right-hand side of equation \eqref{eq:eulermaspre} we can then deduce the curious relation
\begin{equation*}
\lim_{t\uparrow \infty} \left(\log\left(\frac{2t}{r^2}+1\right) - \frac{1}{r}\mathbb{E}\left[L^{ \mathbb{S}^{1}(r)}_t(X)\right]\right) = \gamma
\end{equation*}
where $\gamma$ denotes the Euler-Mascheroni constant.
\end{example}

\section{Probabilistic Estimates}\label{se:section3}

In this section we combine the geometric inequalities of the first section with the It\^{o}-Tanaka formula of the second section and deduce probabilistic estimates for the radial moments of Brownian motion with respect to a submanifold.

\subsection{A Log-Sobolev Inequality Approach}

It is fairly standard practice to deduce exponential integrability from a logarithmic-Sobolev inequality. In this subsection, we show how this can be done in a restricted version of the situation in which we are otherwise interested. For this, denote by $\lbrace P_t:t\geq 0\rbrace$ the heat semigroup on $M$ (acting on some suitable space of functions). 
\begin{theorem}\label{th:logsobest}
Suppose that $M$ is a complete and connected Riemannian manifold of dimension $\dimm$ and that $N$ is a closed embedded submanifold of $M$ of dimension $\dimn\in\lbrace 0,\ldots,\dimm-1\rbrace$. Assume that there exist constants $C_1,\Lambda \geq 0$ such that
\begin{equation}\label{eq:bakled}
\lbrace\Ric(\xi,\xi): \xi \in TM, \|\xi\|=1\rbrace \geq -(\dimm-1)C_1^2
\end{equation}
with at least one of the three conditions \textbf{(C1)}, \textbf{(C2)} or \textbf{(C3)} of Theorem \ref{th:estimatefinal} satisfied with $C_2=0$. Then
\begin{equation}\label{eq:expineq2}
P_t(e^{\theta r_N})(x)\leq \exp\left[\theta\left(r^2_N (x)+(\dimm-\dimn) t \right)^{\frac{1}{2}} + (\dimn \Lambda + (\dimm-1)C_1) \theta t/2 + \theta^2C(t)/2\right]
\end{equation}
for all $\theta,t\geq0$ and
\begin{equation}\label{eq:expineq3}
P_t(e^{\frac{\theta}{2} r^2_N})(x)\leq \exp\left[\frac{\theta \left(\left(r^2_N (x)+(\dimm-\dimn) t \right)^{\frac{1}{2}} + (\dimn \Lambda + (\dimm-1)C_1) \theta t/2\right)^2}{2(1-C(t)\theta)}\right]
\end{equation}
for all $0\leq \theta <C^{-1}(t)$, where
\begin{equation*}
C(t):=\frac{e^{(\dimm-1)C_1^2 t}-1}{(\dimm-1)C_1^2}.
\end{equation*}
\end{theorem}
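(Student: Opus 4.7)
The plan is to derive both bounds by combining Herbst's logarithmic-Sobolev argument with an a priori estimate on $P_t(r_N)(x)$ obtained from Corollary \ref{cor:maincor}. Assumption \eqref{eq:bakled} places the generator in the Bakry-Emery class $CD(-(\dimm-1)C_1^2,\infty)$, from which the local logarithmic Sobolev inequality of Bakry-Ledoux type
\[
P_t(f^2 \log f^2) - P_t(f^2) \log P_t(f^2) \leq 2\,C(t)\, P_t(|\nabla f|^2)
\]
holds with precisely the constant $C(t) = (e^{(\dimm-1)C_1^2 t}-1)/((\dimm-1)C_1^2)$ appearing in the statement.

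To prove \eqref{eq:expineq2} I would apply Herbst's argument to the $1$-Lipschitz function $r_N$: substituting $g = e^{\theta r_N/2}$ into the log-Sobolev inequality and using $|\nabla r_N|\leq 1$ yields a differential inequality for the map $\theta \mapsto \theta^{-1}\log P_t(e^{\theta r_N})(x)$ whose integration from zero (the boundary value being $P_t(r_N)(x)$) gives
\[
P_t(e^{\theta r_N})(x) \leq \exp\bigl(\theta\, P_t(r_N)(x) + \theta^2 C(t)/2\bigr).
\]
To close the bound I must control $P_t(r_N)(x)$. Under any of (C1)--(C3) with $C_2=0$, Corollary \ref{cor:maincor} gives $\tfrac{1}{2}\triangle r_N^2 \leq (\dimm-\dimn) + a\,r_N$ on $M\setminus\Cut(N)$ with $a := \dimn\Lambda + (\dimm-1)C_1$, and this extends globally in the distributional sense, so that
\[
\tfrac{d}{dt} P_t(r_N^2)(x) \leq (\dimm-\dimn) + a\, P_t(r_N)(x) \leq (\dimm-\dimn) + a\sqrt{P_t(r_N^2)(x)}
\]
by Cauchy-Schwarz. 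A direct calculation verifies that the explicit function $U(t) := \bigl((r_N^2(x)+(\dimm-\dimn)t)^{1/2} + at/2\bigr)^2$ is a super-solution of this differential inequality with $U(0)=r_N^2(x)$, hence by ODE comparison $P_t(r_N)(x) \leq \sqrt{P_t(r_N^2)(x)} \leq (r_N^2(x)+(\dimm-\dimn)t)^{1/2} + at/2$, and substituting this into the Herbst bound yields \eqref{eq:expineq2}.

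For \eqref{eq:expineq3} I would invoke the Gaussian identity $e^{\frac{\theta}{2} r_N^2} = \mathbb{E}_Z[e^{\sqrt{\theta}\,Z\,r_N}]$ with $Z \sim N(0,1)$: applying $P_t$, exchanging order by Fubini, and using the pointwise Herbst bound already established (valid for each fixed real parameter $\sqrt{\theta}Z$) gives
\[
P_t(e^{\frac{\theta}{2} r_N^2})(x) \leq \mathbb{E}_Z\!\left[\exp\!\left(\sqrt{\theta}\,Z\, P_t(r_N)(x) + \theta Z^2 C(t)/2\right)\right],
\]
and completing the square in the remaining one-dimensional Gaussian integral produces a bound of the form \eqref{eq:expineq3} provided $\theta C(t) < 1$, with the denominator $1-\theta C(t)$ emerging from the coefficient of $Z^2$ in the exponent. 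The main technical obstacle I anticipate is the justification of the exponential integrability underlying Herbst's argument and Fubini's theorem; the Ricci lower bound guarantees the nonexplosion of $X$, but the cleanest route is probably to first carry out the entire argument with $r_N$ replaced by $r_N \wedge R$ (which is bounded and $1$-Lipschitz, so all of the above manipulations are legitimate) and then pass to the limit $R \uparrow \infty$ by monotone convergence.
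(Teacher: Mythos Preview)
Your treatment of \eqref{eq:expineq2} matches the paper's: Herbst's argument applied to the $1$-Lipschitz function $r_N$, combined with the moment bound $P_t(r_N^2)(x)\le U(t)$ coming from Corollary~\ref{cor:maincor}. The paper obtains the moment bound via the It\^o--Tanaka formula localized by exit times and then Bihari's inequality; your ODE-comparison version with the explicit supersolution $U$ is the same thing in different packaging.

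For \eqref{eq:expineq3}, however, your Gaussian-linearization route does not quite recover the stated bound. Carrying out your computation,
\[
\mathbb{E}_Z\!\left[\exp\!\bigl(\sqrt{\theta}\,Z\,P_t(r_N)(x)+\tfrac{\theta C(t)}{2}Z^2\bigr)\right]
=(1-\theta C(t))^{-1/2}\exp\!\left[\frac{\theta\,(P_t r_N(x))^2}{2(1-\theta C(t))}\right],
\]
so after inserting $P_t r_N(x)\le U(t)^{1/2}$ you pick up an extra prefactor $(1-\theta C(t))^{-1/2}$ that is absent from \eqref{eq:expineq3}. The paper avoids this by citing the result of Aida--Masuda--Shigekawa (see also Aida--Stroock) that the log-Sobolev inequality \eqref{eq:LogSob} directly implies
\[
P_t\bigl(e^{\frac{\theta}{2}F^2}\bigr)(x)\le \exp\!\left[\frac{\theta\,P_t F^2(x)}{2(1-C(t)\theta)}\right],
\]
which is proved by a Herbst-type differential inequality argument applied to $\theta\mapsto P_t(e^{\theta F^2/2})$ itself rather than by linearizing in $Z$. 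Substituting $F=r_N$ and the bound on $P_t(r_N^2)$ then gives exactly \eqref{eq:expineq3}. Your approach yields a perfectly usable estimate of the same shape, but to match the theorem as stated you should replace the Gaussian-linearization step with a reference to (or a reproduction of) the Aida--Masuda--Shigekawa argument.
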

\begin{proof}
Let $X(x)$ a Brownian motion starting at $x \in M$, let $\lbrace D_i\rbrace_{i=1}^\infty$ be an exhaustion of $M$ by regular domains and denote by $\tau_{D_i}$ the first exit time of $X(x)$ from $D_i$. Using It\^{o}'s formula and formula \eqref{eq:rNFormulaforBM}, Corollary \ref{cor:maincor} and Jensen's inequality, we see that
\begin{equation*}
\mathbb{E}\left[ r_N^2(X_{t\wedge \tau_{D_i}}(x) \right] \leq r_N^2 (x) + (\dimm-\dimn) t + (\dimn \Lambda + (\dimm-1)C_1) \int_{0}^{t} \mathbb{E}\left[ r_N^2(X_{s\wedge \tau_{D_i}}(x) \right]^{\frac{1}{2}} ds
\end{equation*}
for all $t\geq 0$. Since the term $r_N^2(x)+(\dimm-\dimn)t$ is increasing in $t$, Bihari's inequality \cite{MR0079154}, which is a nonlinear integral form of Gronwall's inequality, implies
\begin{equation*}
\mathbb{E}\left[ r_N^2(X_{t\wedge \tau_{D_i}}(x) \right] \leq \left(\left(r^2_N (x)+(\dimm-\dimn) t \right)^{\frac{1}{2}} + (\dimn \Lambda + (\dimm-1)C_1) t/2 \right)^2
\end{equation*}
for all $t\geq 0$, from which it follows that
\begin{equation}\label{eq:nonlin2}
P_t(r_N^2)(x) \leq \left(\left(r^2_N (x)+(\dimm-\dimn) t \right)^{\frac{1}{2}} + (\dimn \Lambda + (\dimm-1)C_1) t/2 \right)^2
\end{equation}
for all $t \geq 0$, by Fatou's lemma. Now, Bakry and Ledoux discovered (see \cite{MR2294794} or \cite{MR1453132}) that condition \eqref{eq:bakled} implies the heat kernel logarithmic Sobolev inequality
\begin{equation}\label{eq:LogSob}
\Ent_tf^2(x)\leq 2C(t)P_t\left(\| \nabla f\|^2\right)(x)
\end{equation}
for all $f \in C^\infty(M)$ and $t>0$. By a slight generalization of the classical argument of Herbst (see \cite{MR1767995}) it follows that for Lipschitz $F$ with $\| F\|_{Lip}\leq 1$ and $\theta \in \mathbb{R}$ we have
\begin{equation}\label{eq:smoothineq}
P_t(e^{\theta F})(x) \leq \exp \left[\theta P_tF(x)+\theta^2C(t)/2\right]
\end{equation}
for all $t \geq 0$ while it was proved in \cite{MR1305064} (see also \cite{MR1258492}) that the log-Sobolev inequality \eqref{eq:LogSob} implies
\begin{equation}\label{eq:expest}
P_t(e^{\frac{\theta}{2} F^2})(x) \leq \exp \left[\frac{\theta P_tF^2(x)}{2(1-C(t)\theta)}\right]
\end{equation}
for all $0\leq \theta <C^{-1}(t)$. Since $r_N$ is Lipschitz with $\| r_N \|_{Lip}=1$, inequality \eqref{eq:expineq3} follows from \eqref{eq:nonlin2} by the estimate \eqref{eq:expest} while inequality \eqref{eq:expineq2} is proved similarly, by applying Jensen's inequality to \eqref{eq:nonlin2} and using the estimate \eqref{eq:smoothineq}.
\end{proof}
To obtain exponential integrability for the heat kernel under relaxed curvature assumption we will use a different approach, which is developed in the next subsection. While the estimates \eqref{eq:expineq2} and \eqref{eq:expineq3} are, roughly speaking, the best we have under the conditions of Theorem \ref{th:logsobest}, the estimate \eqref{eq:expineq3} does not reduce to the correct expression in $\mathbb{R}^\dimm$, as we will see, and our later estimates will take into account positive curvature whereas the estimates of this subsection do not. Thus the later estimates are preferable from the point of view of geometric comparison. On the other hand, the estimates given by \cite[Theorem~8.62]{MR1715265}, which concern the case $N=\lbrace x \rbrace$, suggest that the `double exponential' feature of the estimates \eqref{eq:expineq2} and \eqref{eq:expineq3} (which is the inevitable result of using Herbst's argument and Bakry and Ledoux's log-Sobolev constant, as opposed to being a consequence of our moment estimates) is not actually necessary.

\subsection{First and Second Radial Moments}\label{ss:fsrm}

Suppose now that $X(x)$ is a Brownian motion on $M$ with locally bounded and measurable drift $b$ starting from $x \in M$, defined upto an explosion time $\zeta(x)$, and that $N$ is a closed embedded submanifold of $M$ of dimension $\dimn \in \lbrace 0,\ldots,\dimm-1 \rbrace$. We will assume for the majority of this section that there exist constants $\nu \geq 1$ and $\lambda \in \mathbb{R}$ such that the inequality
\begin{equation}\label{eq:triineq}
\left( \frac{1}{2}\triangle + b \right) r_N^2 \leq \nu + \lambda r_N^2
\end{equation}
holds off the cut locus. Unless otherwise stated, future references to the validity of this inequality will refer to it on the domain $M \setminus \Cut(N)$. If $b$ satisfies a linear growth condition in $r_N$ then geometric conditions under which such an inequality arises are given by Theorem \ref{th:estimatefinal} (see Corollary \ref{cor:maincor}), the content of which the reader might like to briefly review. In particular, there are various situations in which one can choose $\lambda=0$. Alternatively, if $N$ is a point and the Ricci curvature is bounded below by a constant $R$ then inequality \eqref{eq:triineq} holds with $\nu = \dimm$ and $\lambda=-R/3$, as stated by inequality \eqref{eq:simpleestimate1R}. Of course, if $N$ is an affine linear subspace of $\mathbb{R}^\dimm$ then inequality \eqref{eq:triineq} holds as an \textit{equality} with $\nu = \dimm-\dimn$ and $\lambda = 0$. Note that inequality \eqref{eq:triineq} does not imply nonexplosion of $X(x)$. This is clear by considering the products of stochastically incomplete manifolds with ones which are not. We therefore use localization arguments to deal with the possibility of explosion.
\begin{theorem}\label{th:secradmomthm}
Suppose there exists constants $\nu \geq 1$ and $\lambda \in \mathbb{R}$ such that inequality \eqref{eq:triineq} holds. Then
\begin{equation}\label{eq:secondmoment}
\mathbb{E} \left[ \mathbb{1}_{\lbrace t< \zeta(x) \rbrace}r_N^{2}(X_t(x))\right] \leq (r_N^2 (x) +\nu R(t))e^{\lambda t}
\end{equation}
for all $t\geq 0$, where $R(t):=(1-e^{-\lambda t})/\lambda$.
\end{theorem}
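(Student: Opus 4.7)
The plan is to apply the classical It\^{o} formula to $r_N^2(X(x))$ via the It\^{o}-Tanaka decomposition \eqref{eq:rNFormulaforBM} from Section \ref{se:section2} (allowing the drift term from $b$, as the author notes is permissible), extract a linear Gronwall inequality for the second moment along an exhaustion $\lbrace D_i\rbrace$ of $M$ by regular domains with exit times $\tau_{D_i}$, and then pass to the limit $i\to\infty$ using Fatou's lemma. The localization step is essential because \eqref{eq:triineq} by itself does not imply non-explosion, as the author has already emphasized.

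Since $r_N(X(x))$ is a continuous semimartingale with $d\langle r_N(X)\rangle_s = ds$ and the function $f(r)=r^2$ is $C^2$, It\^{o}'s formula gives $d(r_N^2(X_s)) = 2r_N(X_s)\,dr_N(X_s) + ds$, with no additional local time introduced. I would then inspect each of the cut-locus-related terms in \eqref{eq:rNFormulaforBM} after multiplication by $2r_N$: first, the $dL^N(X)$-term vanishes identically because $r_N=0$ on its support; second, the $d\mathbb{L}^{\Cut(N)}(X)$-term enters with a nonpositive sign because $r_N\geq 0$ and the measure is nondecreasing; and third, combining the drift and quadratic variation contributions off the cut locus yields $(\tfrac{1}{2}\triangle + b)r_N^2(X_s)\,ds$, which by \eqref{eq:triineq} is bounded by $(\nu+\lambda r_N^2(X_s))\,ds$. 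The remaining martingale part $2\int_0^\cdot r_N(X_s)\,d\beta_s$ is bounded on $[0,\tau_{D_i}]$, hence is a true martingale after stopping, and drops out of the expectation. Writing $\phi_i(t):=\mathbb{E}[r_N^2(X_{t\wedge\tau_{D_i}}(x))]$, the argument reduces to
\begin{equation*}
\phi_i(t) \leq r_N^2(x) + \nu t + \lambda \int_0^t \phi_i(s)\,ds,
\end{equation*}
and the standard linear Gronwall inequality delivers $\phi_i(t)\leq (r_N^2(x)+\nu R(t))e^{\lambda t}$ uniformly in $i$, with $R(t)=(1-e^{-\lambda t})/\lambda$ (and the $\lambda=0$ convention $R(t)=t$). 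Finally, since $\tau_{D_i}\nearrow\zeta(x)$ and $r_N^2(X_{t\wedge\tau_{D_i}}(x))\to r_N^2(X_t(x))$ on $\lbrace t<\zeta(x)\rbrace$, Fatou's lemma yields the claimed bound.

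The main obstacle I anticipate is confirming that the cut-locus terms really do simplify as described after the $r\mapsto r^2$ substitution: specifically, that the $L^N$-contribution is killed by the factor $r_N$ and that the $\mathbb{L}^{\Cut(N)}$-contribution retains a favourable sign. This is the crucial gain that lets the hypothesis \eqref{eq:triineq}, which holds only off $\Cut(N)$, be used freely inside the expectation, even though in principle $X$ visits $\Cut(N)$ on a set of times with positive $d\mathbb{L}^{\Cut(N)}$-measure. The Gronwall step and the uniform treatment of all sign regimes of $\lambda$ via $R(t)$ are then routine.
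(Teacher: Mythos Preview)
Your argument is essentially the paper's for $\lambda\geq 0$, but it breaks when $\lambda<0$, which the theorem explicitly allows. After stopping and taking expectations you actually obtain
\[
\phi_i(t)\;\leq\; r_N^2(x)\;+\;\nu\!\int_0^t\mathbb{P}\{s<\tau_{D_i}\}\,ds\;+\;\lambda\!\int_0^t \mathbb{E}\big[\mathbb{1}_{\{s<\tau_{D_i}\}}r_N^2(X_s(x))\big]\,ds,
\]
and the integrand in the last term is $\mathbb{E}[\mathbb{1}_{\{s<\tau_{D_i}\}}r_N^2(X_s)]$, not $\phi_i(s)=\mathbb{E}[r_N^2(X_{s\wedge\tau_{D_i}})]$. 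Since $\mathbb{E}[\mathbb{1}_{\{s<\tau_{D_i}\}}r_N^2(X_s)]\leq\phi_i(s)$, the replacement you make goes the \emph{wrong way} when $\lambda<0$, so your displayed integral inequality $\phi_i(t)\leq r_N^2(x)+\nu t+\lambda\int_0^t\phi_i(s)\,ds$ is not justified in that regime. Moreover, even granting such an inequality, the ``standard linear Gronwall inequality'' in integral form requires a nonnegative kernel; with $\lambda<0$ it does not directly yield $\phi_i(t)\leq (r_N^2(x)+\nu R(t))e^{\lambda t}$.

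The paper anticipates exactly this difficulty (``we should be careful, since we are allowing the coefficient $\lambda$ to be negative'') and fixes it by working instead with
\[
f_{x,i,2}(t):=\mathbb{E}\big[\mathbb{1}_{\{t<\tau_{D_i}\}}r_N^2(X_t(x))\big],
\]
splitting $\mathbb{E}[r_N^2(X_{t\wedge\tau_{D_i}})]=f_{x,i,2}(t)+\mathbb{E}[\mathbb{1}_{\{t\geq\tau_{D_i}\}}r_N^2(X_{\tau_{D_i}})]$, and observing that both $t\mapsto\mathbb{E}[\mathbb{1}_{\{t\geq\tau_{D_i}\}}r_N^2(X_{\tau_{D_i}})]$ and the cut-locus term are nondecreasing. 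Differentiating (using smoothness of $\partial D_i$) then gives the genuine \emph{differential} inequality $f_{x,i,2}'(t)\leq \nu+\lambda f_{x,i,2}(t)$, to which Gronwall applies for any sign of $\lambda$, followed by monotone convergence. Your treatment of the cut-locus terms and the Fatou-type passage to the limit are fine; the repair needed is to switch to $f_{x,i,2}$ and the differential form so that the $\lambda<0$ case is covered.
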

\begin{proof}
Let $\lbrace D_i \rbrace_{i=1}^{\infty}$ be an exhaustion of $M$ by regular domains and denote by $\tau_{D_i}$ the first exit time of $X(x)$ from $D_i$. Note that $\tau_{D_i} < \tau_{D_{i+1}}$ and that this sequence of stopping times announces the explosion time $\zeta(x)$. Then, by formula \eqref{eq:rNFormulaforBM}, it follows that
\begin{equation}\label{eq:itoforsecond}
\begin{split}
r_N^2 (X_{t\wedge \tau_{D_i}}(x)) =r_N^2 (x) &+ 2 \int_{0}^{t \wedge \tau_{D_i}} r_N (X_s(x)) \left( d\beta_s -d\mathbb{L}^{\Cut(N)}_s(X(x))\right)\\
&+ \int_{0}^{t \wedge \tau_{D_i}} \left( \frac{1}{2}\triangle + b \right) r_N^2 (X_s(x)) ds\\
\end{split}
\end{equation}
for all $t\geq 0$, almost surely. Since the domains $D_i$ are of compact closure the It\^{o} integral in \eqref{eq:itoforsecond} is a true martingale and it follows that
\begin{equation}\label{eq:note1}
\begin{split}
\mathbb{E}\left[r_N^2(X_{t\wedge \tau_{D_i}}(x))\right] = r_N^2(x) &- 2\,\mathbb{E}\left[ \int_0^{t\wedge \tau_{D_i}} r_N(X_s(x))d\mathbb{L}^{\Cut(N)}_s(X(x))\right] \\
&+  \int_{0}^{t} \mathbb{E}\left[\mathbb{1}_{\lbrace s < \tau_{D_i}\rbrace}\left( \frac{1}{2}\triangle + b \right) r_N^2 (X_s(x)) \right] ds
\end{split}
\end{equation}
for all $t\geq 0$, where exchanging the order of integrals in the last term is easily justified by the use of the stopping time and the assumptions of the theorem. Before applying Gronwall's inequality we should be careful, since we are allowing the coefficient $\lambda$ to be negative. For this, note that
\begin{equation}\label{eq:note2}
\mathbb{E}\left[r_N^2(X_{t\wedge \tau_{D_i}}(x))\right] = \mathbb{E}\left[\mathbb{1}_{\lbrace t < \tau_{D_i}\rbrace} r_N^2(X_t(x))\right]+\mathbb{E}\left[\mathbb{1}_{\lbrace t \geq \tau_{D_i}\rbrace} r_N^2(X_{\tau_{D_i}}(x))\right]
\end{equation}
and that the two functions
\begin{equation*}
t\mapsto \mathbb{E}\left[ \int_0^{t\wedge \tau_{D_i}} r_N(X_s(x))d\mathbb{L}^{\Cut(N)}_s(X(x))\right], \quad t\mapsto \mathbb{E}\left[\mathbb{1}_{\lbrace t \geq \tau_{D_i}\rbrace}r_N^2(X_{\tau_{D_i}}(x))\right]
\end{equation*}
are nondecreasing. If we define a function $f_{x,i,2}$ by
\begin{equation*}
f_{x,i,2}(t):= \mathbb{E}\left[ \mathbb{1}_{\lbrace t<\tau_{D_i}\rbrace} r_N^2(X_t(x))\right]
\end{equation*}
then $f_{x,i,2}$ is differentiable, since the boundaries of the $D_i$ are smooth, and it follows from \eqref{eq:note1} and \eqref{eq:note2} that we have the differential inequality
\begin{equation}\label{eq:diffineq2}
\begin{cases}
f'_{x,i,2}(t)\leq \nu + \lambda f_{x,i,2}(t)\\
f_{x,i,2}(0)=r_N^2(x)
\end{cases}
\end{equation}
for all $t\geq 0$. Now applying Gronwall's inequality to \eqref{eq:diffineq2} yields
\begin{equation}\label{eq:basecase}
\mathbb{E} \left[ \mathbb{1}_{\lbrace t<\tau_{D_i}\rbrace}r_N^2(X_t(x)) \right] \leq r_N^2 (x)e^{\lambda t} +\nu\left(\frac{e^{\lambda t}-1}{\lambda}\right)
\end{equation}
for all $t\geq0$, from which the result follows by the monotone convergence theorem.
\end{proof}
\begin{remark}
If one wishes to include on the right-hand of inequality \eqref{eq:triineq} a term that is linear in $r_N$, as in the estimate \eqref{eq:simpleestimate1}, or simply a continuous function of $r_N$, as in the estimates \eqref{eq:estinkappa1}, \eqref{eq:estinkappa2} and \eqref{eq:estinkappa3}, with suitable integrability properties, then one can do so and use a nonlinear version of Gronwall's inequality, such as Bihari's inequality, to obtain an estimate on the left-hand side of \eqref{eq:secondmoment}.
\end{remark}
We will refer the object on the left-hand side of inequality \eqref{eq:secondmoment} as \textit{the second radial moment of $X(x)$ with respect to $N$}. To find an inequality for \textit{the first radial moment of $X(x)$ with respect to $N$} one can simply use Jensen's inequality. Note that $\lim_{\lambda \rightarrow 0} R(t)e^{\lambda t}= t$ and this provides the sense in which Theorem \ref{th:secradmomthm} and similar statements should be interpreted if $\lambda =0$.

\subsection{Nonexplosion}\label{ss:exittimes}

That a Ricci curvature lower bound implies stochastic completeness was originally proved by Yau in \cite{MR505904}, as mentioned earlier. This was extended by Ichihara in \cite{MR676590} and Hsu in \cite{MR1009455} to allow the Ricci curvature to grow in the negative direction in a certain way (like, for example, a negative quadratic in the distance function). Thus the following theorem is well-known in the one point case (in terms of which it can be proved). Using Theorem \ref{th:secradmomthm} our proof is short, so we may as well include it so as to keep the presentation of this article reasonably self-contained. So suppose that $r>0$, let $B_r(N):=\lbrace y \in M : r_N(y) < r\rbrace$ and denote by $\tau_{B_r(N)}$ the first exit time of $X(x)$ from the tubular neighbourhood $B_r(N)$.
\begin{theorem}\label{th:nonexp}
Suppose that $N$ is compact and that there exist constants $\nu\geq 1 $ and $\lambda \in \mathbb{R}$ such that inequality \eqref{eq:triineq} holds. Then $X(x)$ is nonexplosive.
\end{theorem}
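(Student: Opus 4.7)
The plan is to combine Markov's inequality with a uniform-in-$r$ upper bound on the stopped second radial moment $\mathbb{E}\bigl[r_N^2(X_{t \wedge \tau_{B_r(N)}}(x))\bigr]$. Since $N$ is compact and $M$ is complete, Hopf--Rinow guarantees that each closed neighbourhood $\overline{B_r(N)}$ is compact, and the family $\{B_r(N)\}_{r>0}$ exhausts $M$. Consequently $\tau_{B_r(N)} < \zeta(x)$ almost surely and $\tau_{B_r(N)} \uparrow \zeta(x)$ as $r \uparrow \infty$, so non-explosion reduces to showing that $\mathbb{P}(\tau_{B_r(N)} \leq t) \to 0$ as $r \to \infty$, for each fixed $t > 0$.

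The Markov half is immediate: by path-continuity $r_N(X_{\tau_{B_r(N)}}(x)) = r$ on $\{\tau_{B_r(N)} \leq t\}$, so
\[
r^2\,\mathbb{P}(\tau_{B_r(N)} \leq t) \;\leq\; \mathbb{E}\bigl[r_N^2(X_{t \wedge \tau_{B_r(N)}}(x))\bigr].
\]
To bound the right-hand side uniformly in $r$, I would apply the It\^{o}--Tanaka formula \eqref{eq:rNFormulaforBM} to $r_N^2(X)$ stopped at $\tau_{B_r(N)}$, exactly as in the derivation of \eqref{eq:itoforsecond}. Since $r_N$ is bounded by $r$ on $B_r(N)$ the It\^{o} integral is a genuine martingale, and the cut-locus contribution $-2\int r_N\,d\mathbb{L}^{\Cut(N)}$ is non-positive and may be discarded. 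Invoking the Laplacian inequality \eqref{eq:triineq} then produces an integral inequality whose right-hand side features only $\mathbb{E}[\mathbb{1}_{\{s < \tau_{B_r(N)}\}}r_N^2(X_s(x))]$; substituting the bound \eqref{eq:basecase} (whose derivation in the proof of Theorem \ref{th:secradmomthm} carries over verbatim with $\tau_{D_i}$ replaced by $\tau_{B_r(N)}$) into the integrand and performing an elementary computation yields an upper bound on $\mathbb{E}[r_N^2(X_{t \wedge \tau_{B_r(N)}}(x))]$ depending only on $x$, $t$, $\nu$ and $\lambda$ — and in particular not on $r$. Dividing by $r^2$ and sending $r \to \infty$ then concludes the proof.

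The one technical point to watch is that $B_r(N)$ need not be a regular domain for every $r$, so the It\^{o} calculation cannot simply be imported from the proof of Theorem \ref{th:secradmomthm}, where the $D_i$ are assumed regular. This, however, is not a real obstruction: formula \eqref{eq:rNFormulaforBM} is stated for any stopping time strictly less than $\zeta(x)$, and $\tau_{B_r(N)} < \zeta(x)$ almost surely because $\overline{B_r(N)}$ is compact; with the integrand $r_N(X_s)$ bounded by $r$ on the relevant stochastic interval the It\^{o} integral is still a true martingale, and all the expectation manipulations used to produce \eqref{eq:basecase} transfer without further approximation by smooth exhaustions.
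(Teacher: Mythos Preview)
Your proposal is correct and follows essentially the same route as the paper's proof: replace the regular-domain exit times $\tau_{D_i}$ in the derivation of Theorem~\ref{th:secradmomthm} by the tube exit times $\tau_{B_r(N)}$, obtain a second-moment bound uniform in $r$, and convert this via Markov's inequality into an exit-probability estimate that vanishes as $r\to\infty$. You are in fact more explicit than the paper about why $\overline{B_r(N)}$ is compact and why the lack of smoothness of $\partial B_r(N)$ does not obstruct the It\^o--Tanaka computation; the paper simply asserts that the proof of Theorem~\ref{th:secradmomthm} carries over and records the resulting bound $\mathbb{P}\{\tau_{B_i(N)}\le t\}\le i^{-2}(r_N^2(x)+\nu R(t))e^{\lambda t}$.
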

\begin{proof}
By following the proof of Theorem \ref{th:secradmomthm}, with the stopping times $\tau_{D_i}$ replaced by $\tau_{B_i(N)}$, one deduces
\begin{equation*}
\mathbb{P} \lbrace \tau_{B_i(N)} \leq t \rbrace  \leq \frac{\left(r_N^2(x)+\nu R(t)\right) e^{\lambda t}}{i^2}
\end{equation*}
for all $t\geq0$. This crude exit time estimate implies that $X(x)$ is nonexplosive since the compactness of $N$ implies that the stopping times $\tau_{B_i(N)}$ announce the explosion time $\zeta(x)$.
\end{proof}

\subsection{Higher Radial Moments}\label{ss:hrm}

Recall that if $X$ is a real-valued Gaussian random variable with mean $\mu$ and variance $\sigma^2$ then for $\mom \in \mathbb{N}$ one has the formula
\begin{equation}\label{eq:absolutelag}
\mathbb{E}\left[  X ^{2\mom} \right] = \left(2\sigma^2\right)^\mom \mom! L^{-\frac{1}{2}}_\mom \left(-\frac{\mu^2}{2\sigma^2}\right)
\end{equation}
where $L^{\alpha}_{\mom}(z)$ are the \textit{Laguerre polynomials}, defined by the formula
\begin{equation*}
L^{\alpha}_{\mom}(z) = e^z \frac{z^{-\alpha}}{\mom!}\frac{\partial^\mom}{\partial z^\mom}\left(e^{-z}z^{\mom+\alpha}\right)
\end{equation*}
for $\mom=0,1,2,\ldots$ and $\alpha >-1$ (for the properties of Laguerre polynomials used in this article, see \cite{MR0350075}). In particular, if $X(x)$ is a standard Brownian motion on $\mathbb{R}$ starting from $x \in \mathbb{R}$ then
\begin{equation*}
\mathbb{E} \left[\vert X_t(x) \vert^{2\mom} \right]= \left(2t\right)^\mom \mom! L^{-\frac{1}{2}}_\mom \left(-\frac{\vert x\vert^2}{2t}\right)
\end{equation*}
for all $t\geq0$. With this in mind we prove the following theorem, a special case of which is Theorem \ref{th:secradmomthm}, which will be used in the next section to obtain exponential estimates. Theorem \ref{th:secradmomthm} was stated separately because it constitutes the base case in an induction argument.
\begin{theorem}\label{th:evenradmomthm}
Suppose that there exist constants $\nu \geq1$ and $\lambda \in \mathbb{R}$ such that inequality \eqref{eq:triineq} holds and let $\mom \in \mathbb{N}$. Then
\begin{equation}\label{eq:evenmom}
\mathbb{E}\left[ \mathbb{1}_{\lbrace t<\zeta(x)\rbrace}r_N^{2\mom}(X_t(x))\right] \leq  \left(2R(t)e^{\lambda t}\right)^\mom \mom! L^{\frac{\nu}{2}-1}_\mom \left(-\frac{r_N^2(x)}{2R(t)}\right)
\end{equation}
for all $t\geq 0$, where $R(t):=(1-e^{-\lambda t})/\lambda$.
\end{theorem}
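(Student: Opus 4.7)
The plan is to proceed by induction on $\mom$, the base case $\mom=1$ being Theorem \ref{th:secradmomthm}: since $L_1^{\nu/2-1}(z) = \nu/2 - z$, the right-hand side of \eqref{eq:evenmom} at $\mom=1$ reduces to $(r_N^2(x)+\nu R(t))e^{\lambda t}$, matching \eqref{eq:secondmoment}.

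For the inductive step, I apply It\^{o}'s formula to $u \mapsto u^\mom$ composed with the semimartingale $r_N^2(X)$ whose decomposition is given by \eqref{eq:itoforsecond}. Since the martingale part of $r_N^2$ is $\int 2 r_N(X_s)\,d\beta_s$, we have $d[r_N^2]_s = 4 r_N^2(X_s)\,ds$, and a routine calculation yields
\begin{equation*}
d(r_N^{2\mom})(X_s) = 2\mom r_N^{2\mom-1}(X_s)\bigl(d\beta_s - d\mathbb{L}^{\Cut(N)}_s(X(x))\bigr) + \bigl[\mom r_N^{2\mom-2}(\tfrac{1}{2}\triangle+b)r_N^2 + 2\mom(\mom-1) r_N^{2\mom-2}\bigr](X_s)\,ds
\end{equation*}
on $\{s<\zeta(x)\}$. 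By hypothesis \eqref{eq:triineq}, the bracketed drift integrand is bounded by $\mom(\nu+2(\mom-1))r_N^{2\mom-2} + \mom\lambda r_N^{2\mom}$. Stopping at $\tau_{D_i}$ to localize the martingale and dropping the nonnegative cut-locus term, then decomposing and differentiating exactly as in the passage between \eqref{eq:note1} and \eqref{eq:basecase}, one obtains the differential inequality
\begin{equation*}
f_{x,i,\mom}'(t) \leq \mom\lambda f_{x,i,\mom}(t) + \mom(\nu+2(\mom-1)) f_{x,i,\mom-1}(t), \qquad f_{x,i,\mom}(0) = r_N^{2\mom}(x),
\end{equation*}
where $f_{x,i,\mom}(t) := \mathbb{E}[\mathbb{1}_{\{t<\tau_{D_i}\}} r_N^{2\mom}(X_t(x))]$.

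Denote by $h_\mom(t)$ the right-hand side of \eqref{eq:evenmom}. I will identify $h_\mom(t) = \mathbb{E}|Y_t|^{2\mom}$, where $Y$ is the $\nu$-dimensional Ornstein-Uhlenbeck process $dY_t = \tfrac{\lambda}{2} Y_t\,dt + dW_t$ started from any point with $|Y_0| = r_N(x)$. Indeed, $Y_t$ is Gaussian with mean $e^{\lambda t/2}Y_0$ and isotropic variance $R(t)e^{\lambda t}$, so the Laguerre formula \eqref{eq:absolutelag} for absolute Gaussian moments reproduces $h_\mom(t)$ after the simplification $|e^{\lambda t/2}Y_0|^2/(2R(t)e^{\lambda t}) = r_N^2(x)/(2R(t))$. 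Applying It\^{o} directly to $|Y|^{2\mom}$ in $\mathbb{R}^\nu$, using $\tfrac{1}{2}\triangle|y|^{2\mom} = \mom(\nu+2(\mom-1))|y|^{2\mom-2}$ together with the radial drift contribution $\tfrac{\lambda}{2}\langle y,\nabla|y|^{2\mom}\rangle = \mom\lambda|y|^{2\mom}$, shows at once that
\begin{equation*}
h_\mom'(t) = \mom\lambda h_\mom(t) + \mom(\nu+2(\mom-1)) h_{\mom-1}(t), \qquad h_\mom(0) = r_N^{2\mom}(x).
\end{equation*}

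Given the inductive hypothesis $f_{x,i,\mom-1}\leq h_{\mom-1}$, subtracting the two displays and integrating against the factor $e^{-\mom\lambda t}$ yields $f_{x,i,\mom}(t)\leq h_\mom(t)$, and monotone convergence as $i \uparrow \infty$ completes the induction. The main technical obstacle is the It\^{o} computation on the nonsmooth function $r_N^{2\mom}$: it remains clean only because $L^N$ in \eqref{eq:rNFormulaforBM} vanishes against $r_N$ and the $\mathbb{L}^{\Cut(N)}$-contribution has a favourable sign. Identifying the reference process as an Ornstein-Uhlenbeck process on $\mathbb{R}^\nu$ pleasantly bypasses any direct manipulation of Laguerre polynomial recurrences.
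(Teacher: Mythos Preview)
Your overall strategy---derive the same recursive differential inequality for $f_{x,i,2\mom}$ as the paper does, then compare with a reference function $h_\mom$ that satisfies the corresponding ODE with equality---is sound and genuinely different from the paper's route. The paper never identifies a comparison process; it instead proves by brute-force induction the explicit polynomial identity
\[
f_{x,i,2\mom}(t)\;\leq\;\sum_{\sv=0}^{\mom}\binom{\mom}{\sv}(2R(t))^{\mom-\sv}r_N^{2\sv}(x)\,\frac{\Gamma(\tfrac{\nu}{2}+\mom)}{\Gamma(\tfrac{\nu}{2}+\sv)}\,e^{\mom\lambda t}
\]
via Gronwall and a direct integration, and only afterwards recognises the sum as $(2R(t))^\mom \mom!\,L_\mom^{\nu/2-1}(-r_N^2(x)/(2R(t)))$. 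Your Ornstein--Uhlenbeck comparison is more conceptual and explains \emph{why} Laguerre polynomials appear, whereas the paper's argument is self-contained and needs no outside moment formulae.

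There is, however, a real gap in your execution. The hypothesis allows any real $\nu\geq 1$, so the ``$\nu$-dimensional Ornstein--Uhlenbeck process'' and the It\^{o} computation on $\mathbb{R}^\nu$ are undefined unless $\nu$ happens to be an integer; likewise, the moment formula you need is the $\nu$-dimensional analogue of \eqref{eq:absolutelag} (with parameter $\tfrac{\nu}{2}-1$ rather than $-\tfrac{1}{2}$), which the paper does not state. The ODE $h_\mom' = \mom\lambda h_\mom + \mom(\nu+2(\mom-1))h_{\mom-1}$ that you require is nevertheless true for all real $\nu$: since $h_\mom(t)$ and $h_{\mom-1}(t)$ are, for each fixed $t$, polynomials in $\nu$ (the Gamma ratios in $L_\mom^{\nu/2-1}$ are Pochhammer symbols), the ODE is a polynomial identity in $\nu$ which your OU argument verifies for all positive integers and hence for all $\nu$. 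Alternatively, replace the vector process by the scalar CIR-type process $dZ_t=(\nu+\lambda Z_t)\,dt+2\sqrt{Z_t}\,dW_t$, which is well posed for any $\nu>0$ and whose moments $\mathbb{E}[Z_t^\mom]$ satisfy the same recursion by a one-line It\^{o} calculation. Either patch is short, but without one the inductive step does not cover the non-integer case and the proof is incomplete.
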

\begin{proof}
By the assumption of the theorem we see that on $M \setminus \Cut(N)$ and for $\mom \in \mathbb{N}$ we have
\begin{equation*}
\left( \frac{1}{2}\triangle + b\right) r_N^{2\mom} \leq \mom\left(\nu + 2\left(\mom-1\right)\right)r_N^{2\mom-2} + \mom \lambda r_N^{2\mom},
\end{equation*}
and by formula \eqref{eq:rNFormulaforBM} we have
\begin{equation*}
\begin{split}
r_N^{2\mom} (X_{t\wedge \tau_{D_i}}(x)) = r_N^{2\mom} (x) &+ 2\mom \int_{0}^{t \wedge \tau_{D_i}} r_N^{2\mom-1} (X_s(x)) \left(d\beta_s- d\mathbb{L}_{s}^{\Cut(N)}(X(x))\right) \\
& + \int_{0}^{t \wedge \tau_{D_i}} \left( \frac{1}{2}\triangle + b \right) r_N^{2\mom} (X_s(x)) ds
\end{split}
\end{equation*}
for all $t\geq 0$, almost surely, where the stopping times $\tau_{D_i}$ are defined as in the proof of Theorem \ref{th:secradmomthm}. It follows that if we define functions $f_{x,i,2\mom}$ by
\begin{equation*}
f_{x,i,2\mom}(t):= \mathbb{E}\left[\mathbb{1}_{\lbrace t<\tau_{D_i}\rbrace} r_N^{2\mom}(X_t(x))\right]
\end{equation*}
then, arguing as we did in the proof of Theorem \ref{th:secradmomthm}, there is the differential inequality
\begin{equation*}
\begin{cases}
f'_{x,i,2\mom}(t)\leq \mom\left(\nu + 2\left(\mom-1\right)\right) f_{x,i,2(\mom-1)}(t) + \mom\lambda f_{x,i,2\mom}(t)\\
f_{x,i,2\mom}(0)=r_N^{2\mom}(x)
\end{cases}
\end{equation*}
for all $t\geq 0$. Applying Gronwall's inequality yields
\begin{equation}\label{eq:evenmoments}
f_{x,i,2\mom}(t) \leq \left(r_N^{2\mom} (x) +  \mom\left(\nu + 2\left(\mom-1\right)\right) \int_0^t f_{x,i,2(\mom-1)}(s) e^{-\mom\lambda s} ds\right)e^{\mom\lambda t}
\end{equation}
for all $t \geq 0$ and $\mom \in \mathbb{N}$. The next step in the proof is to use induction on $\mom$ to show that
\begin{equation}\label{eq:inductionineq}
f_{x,i,2\mom}(t) \leq \sum_{\sv=0}^{\mom} \binom{\mom}{\sv} \left(2R(t)\right)^{\mom-\sv} r^{2\sv}_N (x) \frac{\Gamma (\frac{\nu}{2}+\mom)}{\Gamma(\frac{\nu}{2}+\sv)}e^{\mom\lambda t} 
\end{equation}
for all $t \geq 0$ and $\mom \in \mathbb{N}$. Inequality \eqref{eq:basecase} covers the base case $\mom=1$. If we hypothesise that the inequality holds for some $\mom-1$ then by inequality \eqref{eq:evenmoments} we have
\begin{equation}\label{eq:prelimind}
f_{x,i,2\mom}(t) \leq \bigg(r_N^{2\mom} (x) + \mom\left(\nu + 2\left(\mom-1\right)\right)\sum_{\sv=0}^{\mom-1} \binom{\mom-1}{\sv} r^{2\sv}_N (x) \frac{\Gamma (\frac{\nu}{2}+\mom-1)}{\Gamma(\frac{\nu}{2}+\sv)}\tilde{R}(t) \bigg)e^{\mom\lambda t}
\end{equation}
for all $t\geq 0$, where $\tilde{R}(t)= \int_0^t\left(2R(s)\right)^{\mom-1-\sv}e^{-\lambda s} ds$. Using $2(\mom-\sv)\tilde{R}(t) = (2R(t))^{\mom-\sv}$ and properties of the Gamma function it is straightforward to deduce inequality \eqref{eq:inductionineq} from inequality \eqref{eq:prelimind} which completes the inductive argument. Since $\nu \geq 1$ we can then apply the relation 
\begin{equation*}
L^{\alpha}_\mom (z) = \sum_{\sv=0}^{\mom} \frac{\Gamma(\mom+\alpha+1)}{\Gamma(\sv+\alpha+1)}\frac{(-z)^\sv}{\sv!(\mom-\sv)!},
\end{equation*}
which can be proved using Leibniz's formula, to see that
\begin{equation*}
\sum_{\sv=0}^{\mom} \binom{\mom}{\sv} (2R(t))^{\mom-\sv} r_N^{2\sv}(x) \frac{\Gamma (\frac{\nu}{2}+\mom)}{\Gamma(\frac{\nu}{2}+\sv)} =  (2R(t))^\mom \mom! L^{\frac{\nu}{2}-1}_\mom \left(-\frac{r_N^2(x)}{2R(t)}\right)
\end{equation*}
and so by inequality \eqref{eq:inductionineq} it follows that
\begin{equation}\label{eq:afhyp}
f_{x,i,2\mom}(t) \leq  \left(2R(t)e^{\lambda t}\right)^\mom \mom! L^{\frac{\nu}{2}-1}_\mom \left(-\frac{r_N^2(x)}{2R(t)}\right)
\end{equation}
for $t\geq0$ and $i,\mom \in \mathbb{N}$. The result follows from this by the monotone convergence theorem.
\end{proof}
We will refer the object on the left-hand side of inequality \eqref{eq:evenmom} as \textit{the $2\mom$-th radial moment of $X(x)$ with respect to $N$}. One can deduce an estimate for \textit{the $(2\mom-1)$-th radial moment of $X(x)$ with respect to $N$} by Jensen's inequality. We conclude this subsection with an example where the even radial moments can be calculated explicitly.
\begin{example}\label{ex:hyp3ex}
Denote by $\mathbb{H}^{3}_\kappa$ the $3$-dimensional hyperbolic space with constant sectional curvatures equal to $\kappa <0$ and suppose that $X(x)$ is a Brownian motion on $\mathbb{H}^{3}_\kappa$ starting at $x$. Then the densities of $X(x)$ are given by the deterministic heat kernel formula
\begin{equation}\label{eq:hkonhyp3}
p^{\mathbb{H}^{3}_\secur}_t(x,y) = \Theta^{-\frac{1}{2}}_y(x)(2\pi t)^{-\frac{3}{2}}\exp\left[-\frac{d^2(x,y)}{2t}+\frac{\secur t}{2}\right]
\end{equation}
for $x,y \in \mathbb{H}^{3}_\secur$ and $t>0$ where
\begin{equation}\label{eq:thetaonhyp3}
\Theta_y(x) = \left(\frac{\sinh \left(\sqrt{-\secur}r_y(x)\right)}{\sqrt{-\secur}r_y(x)}\right)^2.
\end{equation}
By a change of variables it follows that for each $\mom \in \mathbb{N}$ we have
\begin{equation*}
\mathbb{E}[r^{2\mom}_x (X_t(x))] = (2t)^\mom \frac{\Gamma\left(\frac{3}{2}+\mom\right)}{\Gamma\left(\frac{3}{2}\right)}\mathstrut_1 F_1 \left(\frac{3}{2}+\mom,\frac{3}{2},-\frac{\kappa t}{2}\right)
\end{equation*}
for all $t\geq0$, where $\mathstrut_1 F_1$ is the confluent hypergeometric function of the first kind, so in particular
\begin{equation*}
\mathbb{E}[r_x^2(X_t(x))] = 3t-\kappa t^2
\end{equation*}
for all $t\geq0$. This ties in with the fact, proved by Liao and Zheng in \cite{MR1330766}, that on general $M$ if $X(x)$ is a Brownian motion starting at $x \in M$ and if $\tau_\epsilon$ is the first exit time of $X(x)$ from the geodesic ball $B_\epsilon(x)$ then
\begin{equation*}
\mathbb{E}\left[r_x^2(X_{\tau_{\epsilon}\wedge t}(x))\right]= \dimm t-\frac{1}{6}\scal(x)t^2+o(t^2)
\end{equation*}
as $t\downarrow 0$, where $o(t^2)$ might depend upon $\epsilon$ and where $\scal(x)$ denotes the scalar curvature at $x$ (since on $\mathbb{H}^{3}_\kappa$ the scalar curvature is constant and equal to $6\kappa$).
\end{example}

\subsection{Exponential Estimates}\label{ss:exprm}

Before using the estimates of the previous subsection to obtain exponential inequalities, we need the following lemma.
\begin{lemma}\label{lem:techlemma}
For $\alpha,z\geq 0$ and $m=1,2,\ldots$ we have
\begin{equation*}
\mom! L^{\alpha}_\mom(-z) \leq \left(12\left(1+z\right)\right)^\mom\frac{\Gamma\left(\alpha+1+\mom\right)}{\Gamma\left(\alpha +1\right)}.
\end{equation*}
\end{lemma}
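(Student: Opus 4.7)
My plan is to prove the lemma directly from the explicit polynomial expansion of $L^{\alpha}_m$ that is recorded just above the lemma, namely
\[
L^{\alpha}_m(z) = \sum_{k=0}^{m} \frac{\Gamma(m+\alpha+1)}{\Gamma(k+\alpha+1)}\frac{(-z)^k}{k!\,(m-k)!}.
\]
Substituting $-z$ in place of $z$ and multiplying by $m!$, this rearranges as
\[
m!\,L^{\alpha}_m(-z) = \frac{\Gamma(m+\alpha+1)}{\Gamma(\alpha+1)}\sum_{k=0}^{m}\binom{m}{k}\frac{\Gamma(\alpha+1)}{\Gamma(k+\alpha+1)}\,z^{k},
\]
so it suffices to bound the remaining sum by $(12(1+z))^m$. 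This reduction is the first key step; after it the problem becomes a purely elementary estimate.

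The second step is to control the ratios $\Gamma(\alpha+1)/\Gamma(k+\alpha+1)$. Using the functional equation of the Gamma function, one has the product representation
\[
\frac{\Gamma(\alpha+1)}{\Gamma(k+\alpha+1)} = \prod_{j=1}^{k}\frac{1}{\alpha+j},
\]
and since $\alpha\geq 0$, every factor $\alpha+j\geq j\geq 1$, so each of these ratios is bounded above by $1$. Plugging this into the sum and applying the binomial theorem gives
\[
m!\,L^{\alpha}_m(-z) \leq \frac{\Gamma(m+\alpha+1)}{\Gamma(\alpha+1)}\sum_{k=0}^{m}\binom{m}{k} z^k = \frac{\Gamma(m+\alpha+1)}{\Gamma(\alpha+1)}(1+z)^m,
\]
which is in fact stronger than the claimed inequality; the extra factor $12^m$ on the right-hand side of the lemma is then absorbed trivially since $12^m\geq 1$ for $m\geq 1$.

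There is essentially no obstacle here: the only mild subtlety is the observation that monotonicity of the Gamma function on $[1,\infty)$ (or equivalently the product formula above) gives $\Gamma(\alpha+1)\leq \Gamma(k+\alpha+1)$ for $\alpha\geq 0$ and $k\geq 0$, which is where the hypothesis $\alpha\geq 0$ is actually used. The slack $12^m$ in the statement suggests the author may instead be targeting a form convenient for later summation (for instance to simplify the radius-of-convergence estimates when one later exponentiates and sums the moment bounds from Theorem \ref{th:evenradmomthm}); in any event the direct term-by-term comparison above suffices.
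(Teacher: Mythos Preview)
Your proof is correct and in fact yields the sharper inequality
\[
m!\,L^{\alpha}_m(-z) \leq \frac{\Gamma(m+\alpha+1)}{\Gamma(\alpha+1)}(1+z)^m,
\]
so the factor $12^m$ is indeed superfluous. The difference from the paper's argument is purely a matter of bookkeeping: starting from the same expansion
\[
m!\,L^{\alpha}_m(-z) = \sum_{k=0}^{m}\binom{m}{k}z^k\,\frac{\Gamma(\alpha+1+m)}{\Gamma(\alpha+1+k)},
\]
the paper bounds the three factors separately, using $z^k\leq(1+z)^m$, then $\binom{m}{k}\leq 6^m$ via the central binomial coefficient, and finally $(m+1)\leq 2^m$ to absorb the number of terms. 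Your route instead keeps $\binom{m}{k}z^k$ together and applies the binomial theorem after replacing the Gamma ratio by $1$, which both shortens the argument and removes the artificial constant. Since the lemma is only used in Theorem~\ref{th:inequality2} as an upper bound, your sharper version would simply improve the constant hidden inside $\mathbf{R}(t,\theta,x)$ there (the $12$ could be replaced by $1$).
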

\begin{proof}
Recall that
\begin{equation}\label{eq:expres}
\mom!L^{\alpha}_\mom(-z) = \sum_{\sv=0}^\mom \binom{\mom}{\sv}z^\sv \frac{\Gamma\left(\alpha+1+\mom\right)}{\Gamma\left(\alpha +1 + \sv\right)}.
\end{equation}
Now, since $\alpha,z \geq 0$ it follows that $\Gamma\left(\alpha + 1+\sv\right) \geq \Gamma\left(\alpha+1\right)$ and that $z^\sv\leq (1+z)^\mom$ for all $\sv \in \lbrace 0,\ldots,\mom\rbrace$. For such $\sv$ there is the bound
\begin{equation*}
\binom{\mom}{\sv}\leq \left(\frac{\mom e}{\sv}\right)^\sv 
\end{equation*}
and since the largest binomial coefficient is `the middle one' it follows that
\begin{equation*}
\binom{\mom}{\sv}\leq
\begin{cases}
\left(\frac{2\mom e}{\mom+1}\right)^{\frac{\mom+1}{2}}& \text{ if $\mom$ is odd}\\
\left(2e\right)^{\frac{\mom}{2}} &\text{ if $\mom$ is even}\\
\end{cases}
\end{equation*}
and therefore we have the somewhat crude bound
\begin{equation*}
\binom{\mom}{\sv}\leq 6^\mom 
\end{equation*}
for $\sv\in \lbrace 0,\ldots,\mom\rbrace$. Substituting these bounds into equation \eqref{eq:expres} yields
\begin{equation*}
\mom!L^{\alpha}_\mom(-z) \leq (\mom+1) (6(1+z))^\mom\frac{\Gamma\left(\alpha+1+\mom\right)}{\Gamma\left(\alpha +1\right)}
\end{equation*}
from which the lemma follows since $\mom+1\leq 2^\mom$.
\end{proof}
\begin{theorem}\label{th:inequality2}
Suppose that there exist constants $\nu \geq2$ and $\lambda \in \mathbb{R}$ such that inequality \eqref{eq:triineq} holds. Then
\begin{equation}\label{eq:estfordistexp}
\mathbb{E}\left[\mathbb{1}_{\{t< \zeta(x) \}}e^{\theta r_N(X_t(x))}\right]\leq 1 +\left( 1+\mathbf{R}(t,\theta,x)^{-\frac{1}{2}}\right)\left(\mathstrut_1 F_1\left(\frac{\nu}{2},\frac{1}{2}, \mathbf{R}(t,\theta,x)\right)-1\right)
\end{equation}
for all $t,\theta\geq 0$, where
\begin{equation}\label{eq:boldRdefn}
\mathbf{R}(t,\theta,x):=12\theta^2 \left(r_N^2(x)+2R(t)\right)e^{\lambda t}
\end{equation}
with $R(t):=(1-e^{-\lambda t})/\lambda$ and where $\mathstrut_1 F_1$ is the confluent hypergeometric function of the first kind.
\end{theorem}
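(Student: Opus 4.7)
My plan is to expand $e^{\theta r_N(X_t(x))}$ as its Taylor series, take expectations term by term (justified by monotone convergence since $r_N\geq 0$) and then bound the even and odd radial moments separately using the results already proved. The two resulting sums will be matched to the Taylor coefficients of the confluent hypergeometric function appearing on the right-hand side.

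Concretely, for the even moments $\mathbb{E}[\mathbb{1}_{\{t<\zeta(x)\}}r_N^{2m}(X_t(x))]$ I apply Theorem \ref{th:evenradmomthm}, whose Laguerre-polynomial form is tamed by Lemma \ref{lem:techlemma} (with $\alpha=\nu/2-1$ and $z=r_N^2(x)/(2R(t))$) to yield the clean estimate
$$\theta^{2m}\mathbb{E}[\mathbb{1}_{\{t<\zeta(x)\}}r_N^{2m}(X_t(x))]\leq \mathbf{R}(t,\theta,x)^m\frac{\Gamma(\nu/2+m)}{\Gamma(\nu/2)}.$$
For the odd moments, I use Cauchy--Schwarz in the form $\mathbb{E}[r_N^{2m-1}]\leq\sqrt{\mathbb{E}[r_N^{2m-2}]\mathbb{E}[r_N^{2m}]}$ together with the hypothesis $\nu\geq 2$, which ensures $\sqrt{\Gamma(\nu/2+m-1)\Gamma(\nu/2+m)}\leq\Gamma(\nu/2+m)$ since $\nu/2+m-1\geq 1$, to obtain
$$\theta^{2m-1}\mathbb{E}[\mathbb{1}_{\{t<\zeta(x)\}}r_N^{2m-1}(X_t(x))]\leq \mathbf{R}(t,\theta,x)^{m-1/2}\frac{\Gamma(\nu/2+m)}{\Gamma(\nu/2)}.$$

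After adding the trivial $p=0$ contribution of $1$, the task reduces to identifying the two series. Using Legendre's duplication formula $(2m)!=4^{m}m!\Gamma(m+1/2)/\Gamma(1/2)$, the even-power sum equals $\mathstrut_1F_1(\nu/2,1/2,\mathbf{R}/4)-1$, which is bounded above by $\mathstrut_1F_1(\nu/2,1/2,\mathbf{R})-1$ by the monotonicity of $\mathstrut_1F_1$ in its third argument (whose series coefficients are non-negative). For the odd-power sum, the same duplication formula applied to $(2m-1)!=\Gamma(2m)$ yields a termwise comparison with the series for $\mathbf{R}^{-1/2}(\mathstrut_1F_1(\nu/2,1/2,\mathbf{R})-1)$ that reduces to the elementary inequality $m/(2\cdot 4^{m-1})\leq 1$ for $m\geq 1$. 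Combining the two bounds produces the claimed estimate.

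The principal obstacle will be the final bookkeeping step of identifying the series as confluent hypergeometric functions and keeping track of the $\mathbf{R}/4$ versus $\mathbf{R}$ discrepancy; the monotonicity of $\mathstrut_1F_1$ in its third argument absorbs the factor of four in both the even and odd parts. The assumption $\nu\geq 2$ is also essential in the Cauchy--Schwarz step, to discard the inconvenient $\sqrt{\nu/2+m-1}$ factor produced by the ratio of consecutive values of the Gamma function. Finally, a localization argument analogous to the one in the proof of Theorem \ref{th:secradmomthm} transfers the moment estimates from stopped Brownian motion to the event $\{t<\zeta(x)\}$ via the monotone convergence theorem, so this poses no additional difficulty.
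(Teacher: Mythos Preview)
Your proposal is correct and follows essentially the same route as the paper: expand the exponential, bound even moments via Theorem~\ref{th:evenradmomthm} and Lemma~\ref{lem:techlemma}, control odd moments in terms of even ones, and identify the resulting series with ${}_1F_1(\nu/2,1/2,\cdot)$. The only notable difference is in the odd-moment step: the paper uses Jensen's inequality in the form $\mathbb{E}[r_N^{2m-1}]\leq(\mathbb{E}[r_N^{2m}])^{(2m-1)/(2m)}$, whereas you use Cauchy--Schwarz $\mathbb{E}[r_N^{2m-1}]\leq\sqrt{\mathbb{E}[r_N^{2m-2}]\mathbb{E}[r_N^{2m}]}$; both variants require $\nu\geq 2$ to tame the resulting Gamma factors, and the subsequent bookkeeping (your monotonicity-of-${}_1F_1$ absorption of the factor $4$ versus the paper's direct use of $2m\leq 4^m$ to pass from $(2m-1)!$ to $(2m)!$) is equivalent.
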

\begin{proof}
With the stopping times $\tau_{D_i}$ as in the proof of Theorem \ref{th:secradmomthm}, for $\mom \in \mathbb{N}$ with $\mom$ even we see by inequality \eqref{eq:afhyp} that
\begin{equation*}
\mathbb{E}\left[ \mathbf{1}_{\lbrace t<\tau_{D_i}\rbrace}r_x^{\mom}(X_t(x))\right] \leq \left(2R(t)e^{\lambda t}\right)^{\frac{\mom}{2}} \Gamma\left(\frac{\mom}{2}+1\right) L^{\frac{\nu}{2}-1}_{\frac{\mom}{2}} \left(-\frac{r_N^2(x)}{2R(t)}\right)
\end{equation*}
and so, by Jensen's inequality, if $\mom$ is odd then
\begin{equation*}
\mathbb{E}\left[ \mathbf{1}_{\lbrace t<\tau_{D_i}\rbrace}r_x^{\mom}(X_t(x))\right] \leq \left(2R(t)e^{\lambda t}\right)^{\frac{\mom}{2}} \left(\Gamma\left(\frac{\mom+1}{2}+1\right) L^{\frac{\nu}{2}-1}_{\frac{\mom+1}{2}} \left(-\frac{r_N^2(x)}{2R(t)}\right)\right)^{\frac{\mom}{\mom+1}}.
\end{equation*}
It follows from this and Lemma \ref{lem:techlemma}, since $\nu \geq 2$, that
\begin{equation*}
\begin{split}
&\quad\quad\quad \text{ } \mathbb{E}\left[\mathbf{1}_{\{t< \tau_{D_i} \}}e^{\theta r_N(X_t(x))}\right]\\
&\leq 1 + \sum_{\mom=1}^{\infty} \frac{\left(2\theta^2R(t)e^{\lambda t}\right)^\mom}{(2\mom)!}\mom!L^{\frac{\nu}{2}-1}_\mom\left(-\frac{r_N^2(x)}{2R(t)}\right)\\
&\quad\quad+ \sum_{\mom=1}^{\infty} \frac{\left(2\theta^2R(t)e^{\lambda t}\right)^{\frac{2\mom-1}{2}}}{(2\mom-1)!}\left(\mom!L^{\frac{\nu}{2}-1}_\mom\left(-\frac{r_N^2(x)}{2R(t)}\right)\right)^{\frac{2\mom-1}{2\mom}}\\
&\leq 1 + \sum_{\mom=1}^{\infty} \frac{\left(2\theta^2R(t)e^{\lambda t}\right)^\mom}{(2\mom)!} \left(12\left(1+\frac{r_N^2\left(x\right)}{2R(t)}\right)\right)^\mom\frac{\Gamma\left(\frac{\nu}{2}+\mom\right)}{\Gamma\left(\frac{\nu}{2}\right)}\\
&\quad\quad+ \sum_{\mom=1}^{\infty} \frac{\left(2\theta^2R(t)e^{\lambda t}\right)^{\frac{2\mom-1}{2}}}{(2\mom-1)!}\left( \left(12\left(1+\frac{r_N^2\left(x\right)}{2R(t)}\right)\right)^\mom\frac{\Gamma\left(\frac{\nu}{2}+\mom\right)}{\Gamma\left(\frac{\nu}{2}\right)}\right)^{\frac{2\mom-1}{2\mom}}\\
&=\quad\quad \sum_{\mom=0}^{\infty} \frac{\left(24\theta^2\left(R(t)+\frac{r_N^2\left(x\right)}{2}\right)e^{\lambda t}\right)^\mom}{(2\mom)!} \frac{\Gamma\left(\frac{\nu}{2}+\mom\right)}{\Gamma\left(\frac{\nu}{2}\right)}\\
&\quad\quad+ \sum_{\mom=1}^{\infty} \frac{\left(24\theta^2\left(R(t)+\frac{r_N^2\left(x\right)}{2}\right)e^{\lambda t}\right)^{\frac{2\mom-1}{2}}}{(2\mom-1)!}\left(\frac{\Gamma\left(\frac{\nu}{2}+\mom\right)}{\Gamma\left(\frac{\nu}{2}\right)}\right)^{\frac{2\mom-1}{2\mom}}.
\end{split}
\end{equation*}
Now using $(2\mom)!= 2\mom(2\mom-1)!$, $2\mom \leq 4^\mom$ and $\Gamma\left(\frac{\nu}{2}+\mom\right)\geq \Gamma\left(\frac{\nu}{2}\right)$, together with the relation
\begin{equation*}
\sum_{\mom=0}^\infty \frac{z^\mom}{(2\mom)!}\frac{\Gamma\left(\frac{\nu}{2}+\mom\right)}{\Gamma\left(\frac{\nu}{2}\right)} = \mathstrut_1 F_1 \left(\frac{\nu}{2}, \frac{1}{2},\frac{z}{4}\right),
\end{equation*}
which can be seen directly from the definition of $\mathstrut_1 F_1$ as a generalized hypergeometric series, we deduce that
\begin{equation}\label{eq:rhsasasm}
\begin{split}
\mathbb{E}\left[\mathbf{1}_{\{t< \tau_{D_i} \}}e^{\theta r_N(X_t(x))}\right]&\leq \sum_{\mom=0}^\infty \frac{(4\mathbf{R}(t,\theta,x))^p}{(2\mom)!}\frac{\Gamma\left(\frac{\nu}{2}+\mom\right)}{\Gamma \left(\frac{\nu}{2}\right)}\\
&\quad+ \mathbf{R}(t,\theta,x)^{-\frac{1}{2}}\sum_{\mom=1}^\infty \frac{(4\mathbf{R}(t,\theta,x))^p}{(2\mom)!}\frac{\Gamma\left(\frac{\nu}{2}+\mom\right)}{\Gamma \left(\frac{\nu}{2}\right)}\\
&= 1 +\left( 1+\mathbf{R}(t,\theta,x)^{-\frac{1}{2}}\right)\left(\mathstrut_1 F_1\left(\frac{\nu}{2},\frac{1}{2}, \mathbf{R}(t,\theta,x)\right)-1\right)
\end{split}
\end{equation}
and the theorem follows by monotone convergence. \qed
\end{proof}
\begin{remark}
The right-hand side of \eqref{eq:estfordistexp} is a continuous function of $t$, $\theta$ and $x$ and since the function $\mathstrut_1 F_1$ satisfies $\mathstrut_1 F_1\left(\frac{\nu}{2},\frac{1}{2},0\right) = 1$ and
\begin{equation*}
\lim_{r \downarrow 0}r^{-\frac{1}{2}}\left(\mathstrut_1 F_1\left(\frac{\nu}{2},\frac{1}{2},r\right)-1\right)=0
\end{equation*}
it follows that if $x\in N$ then the right-hand side of inequality \eqref{eq:estfordistexp} converges to the limit of the left-hand side as $t \downarrow 0$. Furthermore, for the values of $\nu$ considered in the theorem the right-hand side of \eqref{eq:estfordistexp} grows exponentially with $\mathbf{R}(t,\theta,x)$ (in particular $\mathstrut_1 F_1(1/2,1/2,z)=e^z$).The theorem shows that under the assumptions of the theorem there is no positive time at which the left-hand side of \eqref{eq:estfordistexp} is infinite.
\end{remark}
For $\vert \gamma \vert<1$ the Laguerre polynomials also satisfy the identity
\begin{equation}\label{eq:lagproptwo}
\sum_{\mom=0}^{\infty} \gamma^\mom L^{\alpha}_\mom(z) = (1-\gamma)^{-(\alpha+1)} e^{-\frac{z\gamma}{1-\gamma}},
\end{equation}
which is proved in \cite{MR0350075}. It follows from this identity and equation \eqref{eq:absolutelag} that for a real-valued Gaussian random variable $X$ with mean $\mu$ and variance $\sigma^2$ we have for $\theta \geq 0$ that
\begin{equation*}
\mathbb{E} \left[e^{\frac{\theta}{2} \vert X\vert^2}\right] = \left(1-\theta \sigma^2\right)^{-\frac{1}{2}}\exp\left[\frac{\theta\vert \mu \vert ^2 }{2(1-\theta \sigma^2)}\right]
\end{equation*}
so long as $\theta \sigma^2 <1$ (and there is a well-known generalization of this formula for Gaussian measures on Hilbert spaces). In particular, if $X(x)$ is a standard Brownian motion on $\mathbb{R}$ starting from $x \in \mathbb{R}$ then for $t\geq0$ it follows that
\begin{equation*}
\mathbb{E} \left[e^{\frac{\theta}{2} \vert X_t(x)\vert^2}\right] = \left(1-\theta t\right)^{-\frac{1}{2}}\exp\left[\frac{\theta \vert x \vert^2 }{2(1-\theta t)}\right]
\end{equation*}
so long as $\theta t <1$. With this in mind we state the following theorem.
\begin{theorem}\label{th:expenbm}
Suppose there exists constants $\nu \geq1$ and $\lambda \in \mathbb{R}$ such that inequality \eqref{eq:triineq} holds. Then
\begin{equation}\label{eq:exponential2}
\mathbb{E} \left[\mathbb{1}_{\{t< \zeta(x) \}} e^{\frac{\theta}{2} r_N^2 (X_t(x))}\right] \leq \left(1-\theta R(t) e^{\lambda t}\right)^{-\frac{\nu}{2}}\exp\left[\frac{\theta r_N^2(x)e^{\lambda t}}{2(1-\theta R(t) e^{\lambda t})}\right]
\end{equation}
for all $t,\theta \geq 0$ such that $\theta R(t) e^{\lambda t}<1$, where $R(t):=(1-e^{-\lambda t})/\lambda$.
\end{theorem}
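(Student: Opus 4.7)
The plan is to obtain the exponential estimate by expanding $e^{\theta r_N^2/2}$ as a power series, applying term-by-term the even moment bound from Theorem \ref{th:evenradmomthm}, and then using the Laguerre generating identity \eqref{eq:lagproptwo} to sum the resulting series in closed form.

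First I would reintroduce the exhaustion $\{D_i\}_{i=1}^{\infty}$ and the stopping times $\tau_{D_i}$ used in the proofs of Theorems \ref{th:secradmomthm} and \ref{th:evenradmomthm}. Working with the truncated quantity $\mathbb{E}[\mathbf{1}_{\{t<\tau_{D_i}\}} e^{\frac{\theta}{2} r_N^2(X_t(x))}]$ keeps everything finite and avoids worrying about the explosion time prematurely. Since all terms are nonnegative, Tonelli's theorem lets me freely write
\begin{equation*}
\mathbb{E}\bigl[\mathbf{1}_{\{t<\tau_{D_i}\}} e^{\frac{\theta}{2} r_N^2(X_t(x))}\bigr]
= \sum_{\mom=0}^{\infty} \frac{(\theta/2)^{\mom}}{\mom!}\, f_{x,i,2\mom}(t),
\end{equation*}
where $f_{x,i,2\mom}$ is defined as in the proof of Theorem \ref{th:evenradmomthm}.

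Next I would insert the bound \eqref{eq:afhyp}, namely $f_{x,i,2\mom}(t)\leq (2R(t)e^{\lambda t})^{\mom}\, \mom!\, L^{\frac{\nu}{2}-1}_{\mom}(-r_N^2(x)/(2R(t)))$, which was already established inside that proof. The factors of $\mom!$ and $2^{\mom}$ cancel cleanly and leave
\begin{equation*}
\mathbb{E}\bigl[\mathbf{1}_{\{t<\tau_{D_i}\}} e^{\frac{\theta}{2} r_N^2(X_t(x))}\bigr]
\leq \sum_{\mom=0}^{\infty} \bigl(\theta R(t) e^{\lambda t}\bigr)^{\mom} L^{\frac{\nu}{2}-1}_{\mom}\!\left(-\frac{r_N^2(x)}{2R(t)}\right).
\end{equation*}
At this point the hypothesis $\theta R(t) e^{\lambda t}<1$ is exactly what is needed to apply the identity \eqref{eq:lagproptwo} with $\gamma = \theta R(t) e^{\lambda t}$, $\alpha = \frac{\nu}{2}-1$ and $z = -r_N^2(x)/(2R(t))$. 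Performing this substitution yields the prefactor $(1-\theta R(t) e^{\lambda t})^{-\nu/2}$ together with an exponential whose argument simplifies to $\theta r_N^2(x) e^{\lambda t}/(2(1-\theta R(t) e^{\lambda t}))$, which is precisely the right-hand side of \eqref{eq:exponential2}.

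Finally I would let $i\uparrow \infty$ and apply monotone convergence, since $\tau_{D_i}\uparrow \zeta(x)$ and the integrand is nonnegative. The only real subtleties are the Tonelli interchange and the convergence of the Laguerre series, both of which are handled by the positivity of every term and by the explicit condition $\theta R(t) e^{\lambda t}<1$; the latter is the natural analogue of $\theta t<1$ in the Euclidean Gaussian case and it is the main structural constraint of the theorem. The hardest step is recognising that inequality \eqref{eq:afhyp} has been proved in exactly the form needed to match the Laguerre generating function, so that no further moment manipulation (as was required in Theorem \ref{th:inequality2}) is needed here.
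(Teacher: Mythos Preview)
Your proposal is correct and follows essentially the same route as the paper's own proof: expand the exponential as a power series against the stopped moments $f_{x,i,2\mom}(t)$, apply the Laguerre bound \eqref{eq:afhyp}, sum via the generating identity \eqref{eq:lagproptwo} under the hypothesis $\theta R(t)e^{\lambda t}<1$, and then pass to the limit by monotone convergence. The paper's argument is slightly terser but uses exactly the same ingredients in the same order.
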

\begin{proof}
Using inequality \eqref{eq:afhyp} and equation \eqref{eq:lagproptwo} we see that
\begin{equation}
\begin{split}
\text{ }\mathbb{E} \left[\mathbb{1}_{\{t< \tau_{D_i} \}} e^{\frac{\theta}{2} r_N^2 (X_t(x))}\right] =&\text{ }\sum_{\mom=0}^{\infty} \frac{\theta^\mom}{2^\mom \mom!} f_{x,i,2\mom}(t) \\
\leq&\text{ }\sum_{\mom=0}^{\infty} \left(\theta R(t)e^{\lambda t}\right)^\mom L^{\frac{\nu}{2}-1}_\mom \left(-\frac{r_N^2(x)}{2R(t)}\right) \\
=&\text{ } \left(1-\theta R(t) e^{\lambda t}\right)^{-\frac{\nu}{2}}\exp\left[\frac{\theta r_N^2(x)e^{\lambda t}}{2(1-\theta R(t) e^{\lambda t})}\right] \nonumber
\end{split}
\end{equation}
where we safely switched the order of integration using the stopping time. The result follows by the monotone convergence theorem.
\end{proof}
Theorem \ref{th:expenbm} improves upon the estimate given by the second part of a similar theorem due to Stroock (see \cite[Theorem~5.40]{MR1715265}), since the latter concerns only the one point case, does not take into account positive curvature or the possibility of drift and does not reduce to the correct expression in flat Euclidean space.
\begin{example}
For a qualitative picture of the behaviour of the above estimates, fix $x \in M$ and consider the case in which $X(x)$ is a Brownian motion starting at $x$. Denote by $R$ the infimum of the Ricci curvature of $M$ and assume that $R>-\infty$. Then inequality \eqref{eq:simpleestimate1R} implies that the assumptions of Theorems \ref{th:inequality2} and \ref{th:expenbm} hold when $N=\lbrace x\rbrace$ with $\nu = \dimm$ and $\lambda = -R/3$. For these parameters we plot in Figure \ref{fig:figh} the right-hand sides of the inequalities \eqref{eq:estfordistexp} and \eqref{eq:exponential2} as functions of time for the three cases $R\in \lbrace -1,0,1\rbrace$ with $\theta = \frac{1}{6}$ and $\dimm=3$. Note that if $R>0$ then the left-hand sides of these inequalities are bounded, by Myer's theorem.
\begin{figure}[ht]
\begin{center}
\makebox[\textwidth]{
\includegraphics[width=0.49\textwidth]{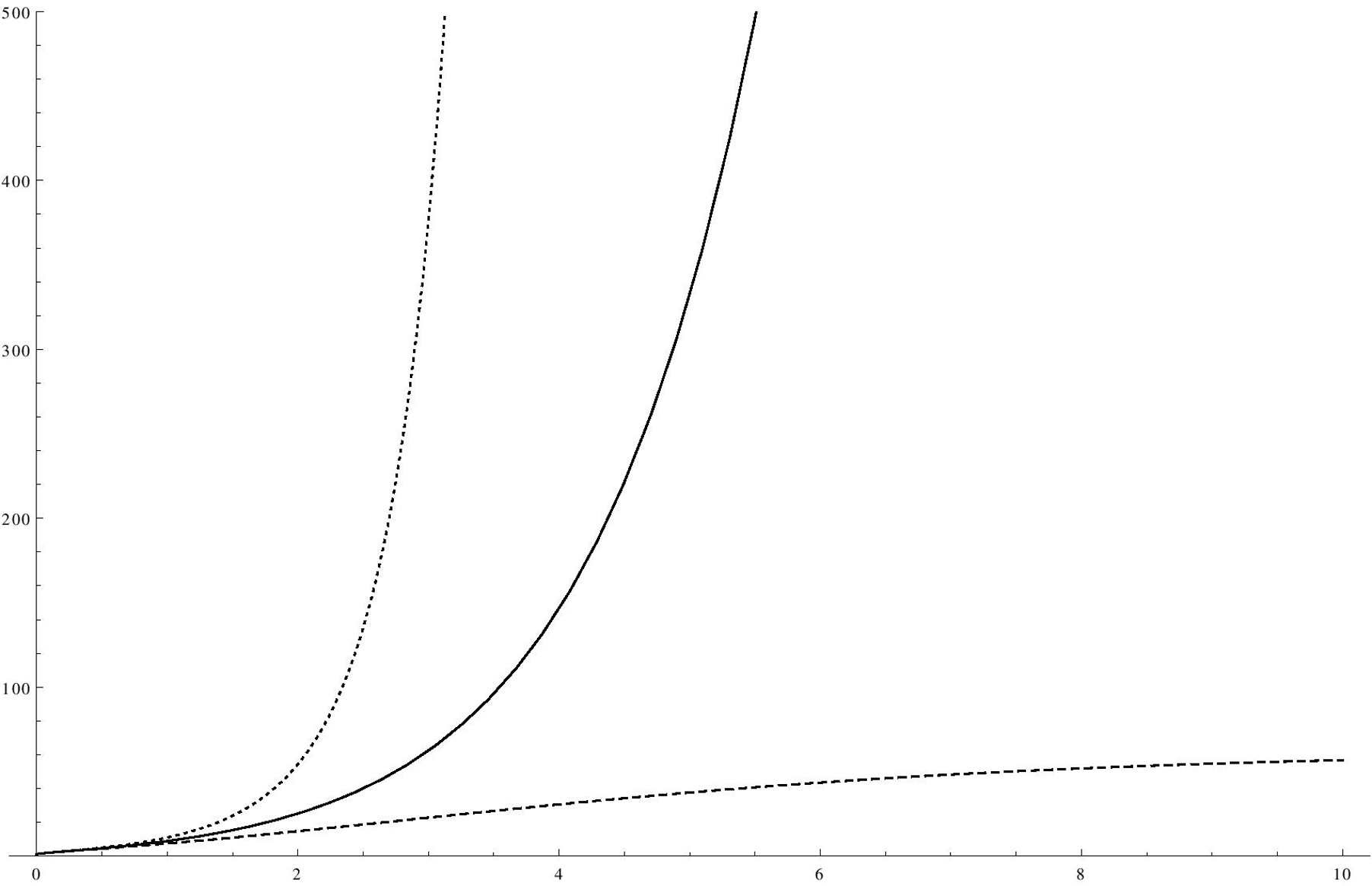}
\hfill
\includegraphics[width=0.49\textwidth]{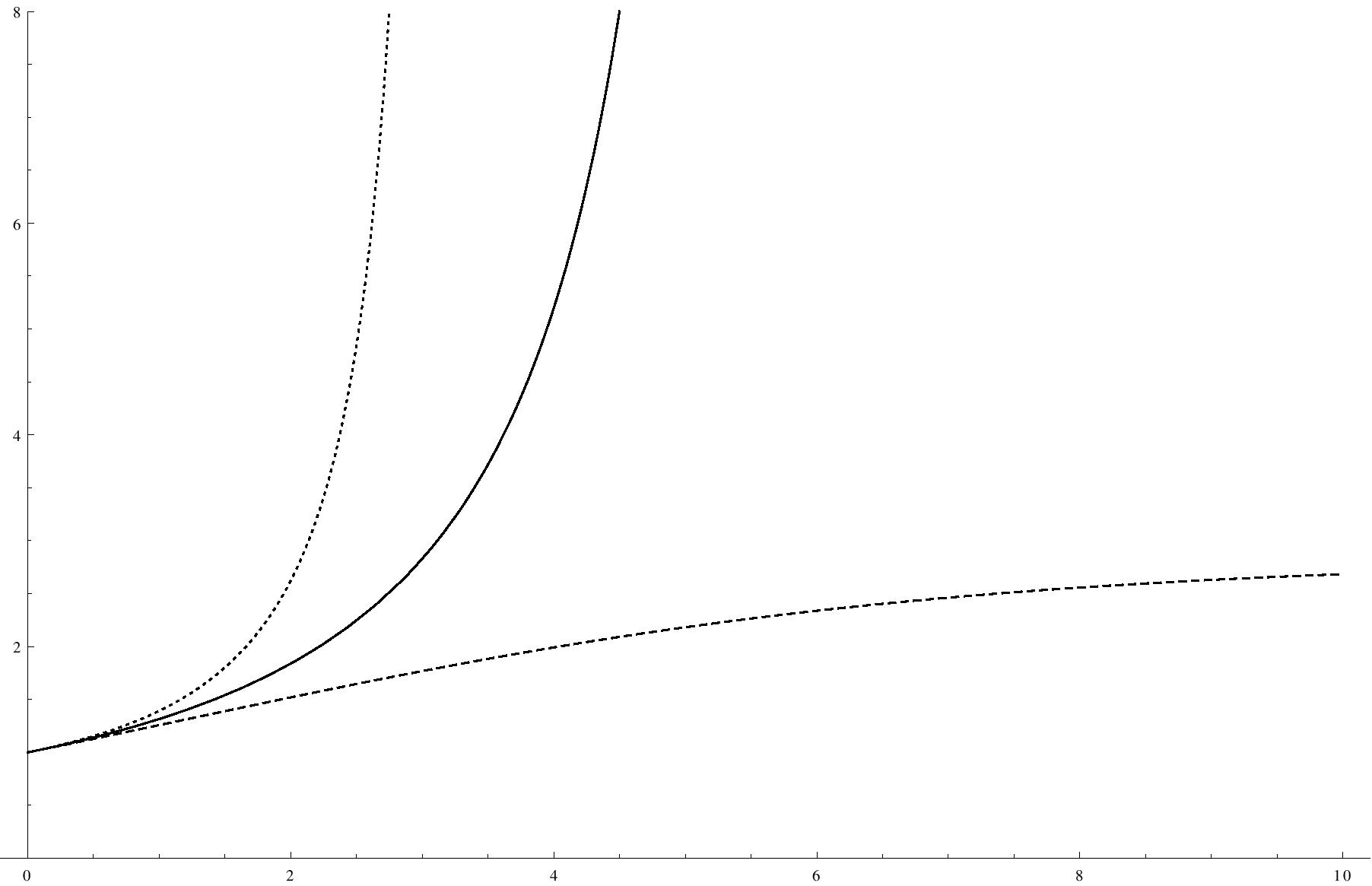}
}
\caption{\small Consider the case $N=\lbrace x\rbrace$ with $\nu = \dimm$ and $\lambda = -R/3$. The solid curve on the left represents the graph of the right-hand side of inequality \eqref{eq:estfordistexp} with $R=0$. Above it is a dotted curve, which is the analogous object for $R=-1$, and below it is a dashed curve, which is the analogous object for $R=1$. The solid curve on the right represents the graph of the right-hand side of inequality \eqref{eq:exponential2} with $R=0$. Above it is a dotted curve, which is the analagous object for $R=-1$, and below it is a dashed curve, which is the analagous object for $R=1$. We have set $\theta = \frac{1}{6}$ and $\dimm=3$ in all cases and the horizontal axes represent the time $t$. Although not obvious from the two plots, the dotted and solid curves plotted on the left do not explode in finite time while the dotted and solid curves plotted on the right explode at times $t=3\log 3\simeq 3.3$ and $t=6$ respectively.}
\label{fig:figh}
\end{center}
\end{figure}
\normalsize
\end{example}
The sharpness of our estimates allows for the following comparison inequality, for which we note that if $\dimn=0$ then $N$ is vacuously both totally geodesic and minimal, since in this case it would be an at most countable collection of isolated points.
\begin{corollary}
Suppose that $X(x)$ is a Brownian motion on $M$ starting at $x$ and that one of the following conditions is satisfied:
\begin{description}
\item[(I)] $\dimn \in \lbrace 0,\ldots,\dimm-1\rbrace$, the sectional curvature of planes containing the radial direction is non-negative and $N$ is totally geodesic;
\item[(II)] $\dimn \in \lbrace 0,\dimm-1\rbrace$, the Ricci curvature in the radial direction is non-negative and $N$ is minimal.
\end{description}
If $Y(y)$ denotes a Brownian motion on $\mathbb{R}^{\dimm-\dimn}$ starting at $y \in \mathbb{R}^{\dimm-\dimn}$ with $r_N^2(x) \leq \| y\|_{\mathbb{R}^{\dimm-\dimn}}$ then
\begin{equation*}
\mathbb{E} \left[e^{\frac{\theta}{2} r_N^2 (X_t(x))}\right] \leq \mathbb{E} \left[e^{\frac{\theta}{2} \|Y_t(y)\|^2_{\mathbb{R}^{\dimm-\dimn}}}\right]
\end{equation*}
for all $t,\theta \geq 0$ such that $\theta t<1$.
\end{corollary}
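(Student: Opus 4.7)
The plan is to reduce the statement to the Euclidean computation by applying Theorem \ref{th:expenbm} with the sharp parameters $\nu = \dimm-\dimn$ and $\lambda = 0$, then compare to the explicit Gaussian moment generating function in $\mathbb{R}^{\dimm-\dimn}$.

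First I would verify that, under either condition \textbf{(I)} or \textbf{(II)}, Corollary \ref{cor:maincor} applies with $C_1 = C_2 = 0$ and $\Lambda = 0$, yielding the sharp Laplacian inequality $\tfrac{1}{2}\triangle r_N^2 \leq \dimm - \dimn$. Under \textbf{(I)}, condition \textbf{(C1)} of Theorem \ref{th:estimatefinal} is in force: sectional curvatures of planes containing the radial direction are non-negative (so $C_1 = C_2 = 0$), and $N$ being totally geodesic forces every principal curvature to vanish (so $\Lambda = 0$); this gives $\nu = \dimm - \dimn$. Under \textbf{(II)} with $\dimn = 0$, condition \textbf{(C2)} applies with $C_1 = C_2 = 0$ and $\nu = \dimm = \dimm-\dimn$, while for $\dimn = \dimm-1$, condition \textbf{(C3)} applies with $C_1 = C_2 = 0$, and minimality forces $H_\xi = 0$ so $\Lambda = 0$, giving $\nu = 1 = \dimm-\dimn$.

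Next, I would apply Theorem \ref{th:expenbm} with $\lambda = 0$, using the convention $\lim_{\lambda\to 0} R(t)e^{\lambda t} = t$ noted after the proof of Theorem \ref{th:secradmomthm}. This yields
\begin{equation*}
\mathbb{E}\left[\mathbb{1}_{\{t<\zeta(x)\}} e^{\frac{\theta}{2} r_N^2(X_t(x))}\right] \leq (1-\theta t)^{-\frac{\dimm-\dimn}{2}} \exp\left[\frac{\theta\, r_N^2(x)}{2(1-\theta t)}\right]
\end{equation*}
for $\theta t < 1$. A direct calculation (completing the square in the Gaussian density) then shows
\begin{equation*}
\mathbb{E}\left[e^{\frac{\theta}{2}\|Y_t(y)\|^2_{\mathbb{R}^{\dimm-\dimn}}}\right] = (1-\theta t)^{-\frac{\dimm-\dimn}{2}} \exp\left[\frac{\theta\,\|y\|^2_{\mathbb{R}^{\dimm-\dimn}}}{2(1-\theta t)}\right]
\end{equation*}
for $\theta t < 1$, which is exactly the same bound with $r_N^2(x)$ replaced by $\|y\|^2$.

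Finally, I would conclude by monotonicity. Since the right-hand side of the first estimate is a continuous, nondecreasing function of $r_N^2(x)$, the hypothesis $r_N^2(x) \leq \|y\|^2_{\mathbb{R}^{\dimm-\dimn}}$ permits us to replace $r_N^2(x)$ by $\|y\|^2_{\mathbb{R}^{\dimm-\dimn}}$ in the upper bound, giving the Euclidean expression on the right-hand side. The only minor subtlety is the matter of the indicator $\mathbb{1}_{\{t<\zeta(x)\}}$: under either \textbf{(I)} or \textbf{(II)} the non-negativity of the relevant curvatures (combined with standard comparison arguments, or equivalently with \eqref{eq:triineq} with $\lambda = 0$ as in the proof of Theorem \ref{th:nonexp}) suffices to interpret the left-hand side unambiguously, and in any case the unindicated expectation is bounded above by the indicated one under the usual convention that $X_t(x)$ is undefined on $\{t\geq\zeta(x)\}$. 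The rest is bookkeeping; the main point is the exact match between the sharp constants produced by Corollary \ref{cor:maincor} and the closed-form Euclidean Gaussian integral.
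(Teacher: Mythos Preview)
Your proposal is correct and follows exactly the route the paper intends: the paper's own proof reads in its entirety ``This follows directly from Theorem \ref{th:expenbm} and Corollary \ref{cor:maincor},'' and you have simply unpacked that sentence by identifying the parameters $\nu=\dimm-\dimn$, $\lambda=0$, $C_1=C_2=\Lambda=0$ under each of \textbf{(I)} and \textbf{(II)}, and then matching the resulting bound against the closed-form Gaussian moment generating function. One small slip in your final paragraph: the unindicated expectation is bounded \emph{below} (not above) by the indicated one, so the cleaner way to dispose of the indicator is via your first remark (non-explosion from the curvature hypotheses, cf.\ Theorem \ref{th:nonexp} or Yau's criterion) rather than the second.
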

\begin{proof}
This follows directly from Theorem \ref{th:expenbm} and Corollary \ref{cor:maincor}.
\end{proof}
To find a comparison theorem which takes into account negative curvature seems harder. We can, however, perform an explicit calculation for the following special case, which compares favourably with our best estimate.
\begin{example}
Suppose that $X(x)$ is a Brownian motion on $\mathbb{H}^{3}_\kappa$ starting at $x$. Then, using formulae \eqref{eq:hkonhyp3} and \eqref{eq:thetaonhyp3}, one can show that
\begin{equation}\label{eq:expmomh3}
\mathbb{E}[e^{\frac{\theta}{2} r^2_x(X_t(x))}] = \left(1-\theta t\right)^{-\frac{3}{2}}\exp\left[-\frac{\theta \kappa t^2}{2(1-\theta t)}\right]
\end{equation}
for all $t>0$ and $\theta \geq 0$ such that $\theta t <1$. Note that the explosion time of the right-hand side of formula \eqref{eq:expmomh3} is independent of $\kappa$.
\end{example}

\subsection{Concentration Inequalities}\label{ss:cncntrtnineq}

If $X(x)$ is a Brownian motion on $\mathbb{R}^{\dimm}$ starting at $x$ then it is easy to see that
\begin{equation}\label{eq:asymrmn3}
\lim_{r\rightarrow \infty}\frac{1}{r^2}\log \mathbb{P}\lbrace X_t(x) \not\in B_r(x)\rbrace = -\frac{1}{2t}
\end{equation}
for all $t>0$. Note that the right-hand side of the asymptotic relation \eqref{eq:asymrmn3} does not depend on the dimension $\dimm$. Returning to the setting of Example \ref{ex:hyp3ex} for a final time, we find another situation where there is a relation of the type \eqref{eq:asymrmn3}.
\begin{example}
Suppose that $X(x)$ is a Brownian motion on $\mathbb{H}^{3}_\kappa$ starting at $x$. Then, by tedious calculation, one can show that
\begin{equation*}
\lim_{r\rightarrow \infty}\frac{1}{r^2}\log \mathbb{P}\lbrace X_t(x) \not\in B_r(x)\rbrace = -\frac{1}{2t}
\end{equation*}
for all $t>0$.
\end{example}
A heat kernel comparison argument would suggest that a relation of this type should hold in general for a Brownian motion $X(x)$ on $M$ but as an inequality, so long as the Ricci curvature is bounded below by a constant. Indeed, it follows from \cite[Theorem~8.62]{MR1715265} that in this case there is the asymptotic estimate
\begin{equation*}
\lim_{r \uparrow \infty}\frac{1}{r^2}\log \mathbb{P}\bigg\lbrace \sup_{s\in \left[0,t\right]} r_x(X_t(x)) \geq r\bigg\rbrace \leq -\frac{1}{2t}.
\end{equation*}
For the general setting considered in this article, we have the following theorem.
\begin{theorem}\label{th:concenthm}
Suppose there exists constants $\nu \geq1$ and $\lambda \in \mathbb{R}$ such that inequality \eqref{eq:triineq} holds and suppose that $X(x)$ is nonexplosive. Then
\begin{equation*}
\lim_{r\rightarrow \infty} \frac{1}{r^2}\log \mathbb{P} \lbrace X_t(x) \notin B_r(N) \rbrace \leq -\frac{1}{2R(t) e^{\lambda t}}
\end{equation*}
for all $t>0$, where $R(t):=(1-e^{-\lambda t})/\lambda$.
\end{theorem}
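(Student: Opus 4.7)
The plan is to deduce the concentration inequality from the exponential moment bound of Theorem \ref{th:expenbm} by a standard Chernoff-type argument. Since $X(x)$ is assumed nonexplosive, the indicator $\mathbb{1}_{\{t<\zeta(x)\}}$ can be dropped from all expectations.

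First I would rewrite the event of interest as
\begin{equation*}
\{X_t(x) \notin B_r(N)\} = \{r_N(X_t(x)) \geq r\} = \left\{e^{\frac{\theta}{2} r_N^2(X_t(x))} \geq e^{\frac{\theta}{2} r^2}\right\}
\end{equation*}
for any $\theta > 0$. Markov's inequality then yields
\begin{equation*}
\mathbb{P}\{X_t(x) \notin B_r(N)\} \leq e^{-\frac{\theta}{2} r^2}\, \mathbb{E}\left[e^{\frac{\theta}{2} r_N^2(X_t(x))}\right],
\end{equation*}
and for any $\theta$ in the range $0 < \theta R(t) e^{\lambda t} < 1$ Theorem \ref{th:expenbm} bounds the right-hand side above by
\begin{equation*}
e^{-\frac{\theta}{2} r^2}\, (1-\theta R(t) e^{\lambda t})^{-\nu/2}\exp\left[\frac{\theta r_N^2(x) e^{\lambda t}}{2(1-\theta R(t) e^{\lambda t})}\right].
\end{equation*}

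Taking logarithms, dividing by $r^2$, and sending $r \to \infty$, all terms except $-\theta/2$ are independent of $r$ and finite for the admissible range of $\theta$, so
\begin{equation*}
\limsup_{r \to \infty}\frac{1}{r^2}\log \mathbb{P}\{X_t(x)\notin B_r(N)\} \leq -\frac{\theta}{2}.
\end{equation*}
The inequality is valid for every $\theta \in (0, 1/(R(t)e^{\lambda t}))$, so optimizing by letting $\theta \uparrow 1/(R(t)e^{\lambda t})$ gives the claimed bound $-1/(2R(t)e^{\lambda t})$.

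There is no real obstacle: the substantive work has already been carried out in establishing the exponential moment estimate of Theorem \ref{th:expenbm}, and the concentration inequality is essentially its Chernoff corollary. The only point that requires a moment of care is ensuring that the range of admissible $\theta$ in Theorem \ref{th:expenbm} is a nondegenerate interval for every $t > 0$ (which follows from $R(t) e^{\lambda t} > 0$, interpreted as $t$ when $\lambda = 0$), so that the supremum $\theta \uparrow 1/(R(t)e^{\lambda t})$ is approachable.
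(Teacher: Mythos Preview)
Your proof is correct and essentially identical to the paper's own argument: both apply Markov's inequality to $e^{\frac{\theta}{2} r_N^2(X_t(x))}$, invoke the exponential bound of Theorem \ref{th:expenbm}, divide by $r^2$ and let $r\to\infty$, then push $\theta$ up to $1/(R(t)e^{\lambda t})$. The only cosmetic difference is that the paper reparametrises $\theta=\delta/(R(t)e^{\lambda t})$ with $\delta\in[0,1)$ before taking limits, whereas you work directly with $\theta$; this changes nothing.
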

\begin{proof}
The proof follows a standard argument. In particular, for $\theta \geq 0$ and $r>0$ it follows from Markov's inequality and Theorem \ref{th:expenbm} that
\begin{equation*}
\begin{split}
\mathbb{P} \lbrace X_t(x) \notin B_r(N)\rbrace &= \mathbb{P} \lbrace r_N(X_t(x))\geq r\rbrace\\
&= \mathbb{P} \lbrace e^{\frac{\theta r_N^2(X_t(x))}{2}}\geq e^{\frac{\theta r^2}{2}}\rbrace\\
&\leq e^{-\frac{\theta r^2}{2}}\mathbb{E}\left[e^{\frac{\theta r_N^2(X_t(x))}{2}}\right] \\
&\leq \left(1-\theta R(t) e^{\lambda t}\right)^{-\frac{\nu}{2}} \exp \left[\frac{\theta r_N^2(x) e^{\lambda t}}{2(1-\theta R(t) e^{\lambda t})}-\frac{\theta r^2}{2}\right]
\end{split}
\end{equation*}
so long as $\theta R(t) e^{\lambda t}<1$. If $t>0$ then choosing $\theta =\delta (R(t)e^{\lambda t})^{-1} $ shows that for any $\delta \in \left[0,1\right)$ and $r>0$ we have the estimate
\begin{equation}\label{eq:concentrate}
\mathbb{P} \lbrace X_t(x) \notin B_r(N) \rbrace \leq (1-\delta)^{-\frac{\nu}{2}}\exp\left[\frac{r_N^2(x)\delta}{2R(t)(1-\delta)}-\frac{\delta r^2}{2R(t)e^{\lambda t}}\right]
\end{equation}
from which the theorem follows since $\delta$ can be chosen arbitrarily close to $1$ after taking the limit.
\end{proof}
While Theorem \ref{th:concenthm} is trivial if $M$ is compact, the concentration inequality \eqref{eq:concentrate} is still valid in that setting. In fact, for $r>0$ suppose that $\nu\geq 1$ and $\lambda \geq 0$ are constants such that the inequality
\begin{equation*}
\left(\frac{1}{2}\triangle +b\right) \leq \nu + \lambda r_N^2
\end{equation*}
holds on the tubular neighbourhood $B_r(N)$ (such constants always exist if $N$ is compact, by Corollary \ref{cor:corone}). Assuming that $X(x)$ is non-explosive (which would be the case if $N$ is compact, by Theorem \ref{th:nonexp}) then the methods of this chapter can also be used to estimate quantities involving the process $X(x)$ stopped on the boundary of the tubular neighbourhood. We will not include such calculations here, to avoid extensive repetition, but doing so actually yields the exit time estimate
\begin{equation*}
\mathbb{P}\bigg\lbrace \sup_{s\in \left[0,t\right]} r_N(X_s(x)) \geq r \bigg\rbrace\leq (1-\delta)^{-\frac{\nu}{2}}\exp\left[\frac{r_N^2(x)\delta}{2R(t)(1-\delta)}-\frac{\delta r^2}{2R(t)e^{\lambda t}}\right]
\end{equation*}
for all $t>0$ and $\delta \in (0,1)$, which improves inequality \eqref{eq:concentrate} for the $\lambda \geq 0$ case.

\subsection{Feynman-Kac Estimates}\label{ss:fkineq}

The following two propositions and their corollaries constitute simple applications of Theorems \ref{th:inequality2} and \ref{th:expenbm} and provide bounds on the operator norm of certain Feynman-Kac semigroups, when acting on suitable Banach spaces of functions.
\begin{proposition}\label{prop:FKestpre2}
Suppose there exists constants $\nu \geq2$ and $\lambda \geq 0$ such that inequality \eqref{eq:triineq} holds. Then
\small
\begin{equation*}
\mathbb{E} \left[\mathbf{1}_{\{t< \zeta(x) \}} e^{\theta \int_0^t r_N(X_s(x))ds}\right]\leq 1 +\left( 1+\mathbf{R}(t,\theta t,x)^{-\frac{1}{2}}\right)\left(\mathstrut_1 F_1\left(\frac{\nu}{2},\frac{1}{2}, \mathbf{R}(t,\theta t,x)\right)-1\right) 
\end{equation*}
\normalsize
for all $t,\theta \geq 0$, where $\mathbf{R}$ is defined by \eqref{eq:boldRdefn}.
\end{proposition}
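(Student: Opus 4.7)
The plan is to deduce this from Theorem \ref{th:inequality2} by a standard Jensen--Fubini argument, exploiting the fact that $\lambda \geq 0$.

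First I would write
\begin{equation*}
\theta \int_0^t r_N(X_s(x))\,ds = (\theta t)\cdot \frac{1}{t}\int_0^t r_N(X_s(x))\,ds
\end{equation*}
and apply Jensen's inequality to the convex function $u\mapsto e^{(\theta t)u}$ with respect to the uniform probability measure $\frac{1}{t}ds$ on $[0,t]$, obtaining
\begin{equation*}
e^{\theta \int_0^t r_N(X_s(x))\,ds}\leq \frac{1}{t}\int_0^t e^{(\theta t)r_N(X_s(x))}\,ds.
\end{equation*}
Multiplying by $\mathbf{1}_{\{t<\zeta(x)\}}$, using the trivial bound $\mathbf{1}_{\{t<\zeta(x)\}}\leq \mathbf{1}_{\{s<\zeta(x)\}}$ for $s\in[0,t]$, taking expectation and exchanging with the time integral by Tonelli (everything is non-negative), I get
\begin{equation*}
\mathbb{E}\left[\mathbf{1}_{\{t<\zeta(x)\}}e^{\theta\int_0^t r_N(X_s(x))\,ds}\right]\leq \frac{1}{t}\int_0^t \mathbb{E}\left[\mathbf{1}_{\{s<\zeta(x)\}}e^{(\theta t)r_N(X_s(x))}\right]ds.
\end{equation*}

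Next I would apply Theorem \ref{th:inequality2} pointwise in $s$ with exponent $\theta t$: each integrand is bounded by $g(\mathbf{R}(s,\theta t,x))$ where
\begin{equation*}
g(r):= 1+\left(1+r^{-1/2}\right)\!\left(\mathstrut_1F_1\!\left(\tfrac{\nu}{2},\tfrac{1}{2},r\right)-1\right).
\end{equation*}
It remains to show that the integrand is non-decreasing in $s$, so that the integral mean is dominated by the value at $s=t$. For this I would argue in two steps. Since $\lambda\geq 0$, both $R(s)=(1-e^{-\lambda s})/\lambda$ and $e^{\lambda s}$ are non-decreasing in $s$, so from the definition \eqref{eq:boldRdefn} the quantity $\mathbf{R}(s,\theta t,x)=12(\theta t)^2(r_N^2(x)+2R(s))e^{\lambda s}$ is non-decreasing in $s$. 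On the other hand $g$ is non-decreasing in $r\geq 0$: writing $\mathstrut_1F_1(\tfrac{\nu}{2},\tfrac{1}{2},r)-1=\sum_{m=1}^\infty\frac{(\nu/2)_m}{(1/2)_m m!}r^m$, the first summand $\mathstrut_1F_1(\tfrac{\nu}{2},\tfrac{1}{2},r)$ has non-negative power-series coefficients, and the second summand $r^{-1/2}(\mathstrut_1F_1(\tfrac{\nu}{2},\tfrac{1}{2},r)-1)=\sum_{m\geq 1}\frac{(\nu/2)_m}{(1/2)_m m!}r^{m-1/2}$ likewise has non-negative coefficients in $r^{1/2}$.

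Combining these two monotonicities gives $\frac{1}{t}\int_0^t g(\mathbf{R}(s,\theta t,x))\,ds\leq g(\mathbf{R}(t,\theta t,x))$, which is exactly the bound in the proposition. The only mildly delicate point is the monotonicity of the right-hand side of \eqref{eq:estfordistexp} in its second argument; everything else is bookkeeping. I would expect the hypothesis $\lambda\geq 0$ (rather than the general $\lambda\in\mathbb{R}$ used elsewhere) to be precisely what is needed to make the monotonicity of $\mathbf{R}$ in the time variable work.
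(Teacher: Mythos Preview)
Your proposal is correct and follows essentially the same approach as the paper: Jensen on the time average followed by the pointwise bound from Theorem~\ref{th:inequality2} and the monotonicity in $s$ of the resulting right-hand side. The only cosmetic differences are that the paper localises with the stopping times $\tau_{D_i}$ and passes to the limit by monotone convergence (you go straight to $\zeta(x)$ via Tonelli, which is equally valid here), and that for the monotonicity the paper points to the series form~\eqref{eq:rhsasasm} rather than expanding ${}_1F_1$ directly as you do.
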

\begin{proof}
Using the stopping times $\tau_{D_i}$ to safely exchange the order of integrals, we see by Jensen's inequality that
\begin{equation*}
\mathbb{E} \left[\mathbf{1}_{\{t< \tau_{D_i} \}}e^{\theta \int_0^t r_N(X_s(x))ds}\right] \leq \frac{1}{t}\int_0^t \mathbb{E} \left[\mathbf{1}_{\{s< \tau_{D_i}\}}  e^{t\theta  r_N(X_s(x))}\right]ds. 
\end{equation*}
The result follows from this by the monotone convergence theorem and Theorem \ref{th:inequality2}, since the right-hand side of inequality \eqref{eq:estfordistexp} is nondecreasing in $t$ (which is evident by the right-hand side of inequality \eqref{eq:rhsasasm} and the fact that $\mathbf{R}(t,\theta,x)$ is nondecreasing in $t$). \qed
\end{proof}
\begin{corollary}\label{cor:Vest2}
Suppose there exists constants $\nu \geq2$ and $\lambda \geq 0$ such that inequality \eqref{eq:triineq} holds and that $V$ is a measurable function on $M$ such that $V\leq C(1+r_N)$ for some constant $C\geq 0$. Then
\footnotesize
\begin{equation*}
\mathbb{E} \left[\mathbf{1}_{\{t< \zeta(x) \}} e^{\int_0^t V(X_s(x))ds}\right] \leq e^{Ct}\left(1 +\left( 1+\mathbf{R}(t,Ct,x)^{-\frac{1}{2}}\right)\left(\mathstrut_1 F_1\left(\frac{\nu}{2},\frac{1}{2}, \mathbf{R}(t,Ct,x)\right)-1\right)\right) 
\end{equation*}
\normalsize
for all $t\geq 0$, where $\mathbf{R}$ is defined by \eqref{eq:boldRdefn}.
\end{corollary}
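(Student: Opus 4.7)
The plan is to reduce the statement to Proposition \ref{prop:FKestpre2} by exploiting the pointwise bound on $V$. First I would observe that the hypothesis $V \leq C(1+r_N)$ immediately yields the integral bound
\begin{equation*}
\int_0^t V(X_s(x))\,ds \leq Ct + C\int_0^t r_N(X_s(x))\,ds
\end{equation*}
on $\{t < \zeta(x)\}$, so that the Feynman-Kac functional factorises as
\begin{equation*}
\mathbf{1}_{\{t<\zeta(x)\}}e^{\int_0^t V(X_s(x))\,ds} \leq e^{Ct}\,\mathbf{1}_{\{t<\zeta(x)\}}e^{C\int_0^t r_N(X_s(x))\,ds}.
\end{equation*}

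Next I would take expectations and pull the deterministic factor $e^{Ct}$ outside. This leaves precisely the quantity controlled by Proposition \ref{prop:FKestpre2} with $\theta = C$. Applying that proposition, under the standing hypotheses $\nu \geq 2$ and $\lambda \geq 0$ which are exactly what Proposition \ref{prop:FKestpre2} requires, produces the bound with $\mathbf{R}(t, Ct, x)$ in place of $\mathbf{R}(t, \theta t, x)$, giving the stated inequality.

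There is essentially no obstacle here: the work has been done in Proposition \ref{prop:FKestpre2}, and the corollary only requires the elementary observation that a linear growth bound on $V$ in the variable $r_N$ converts exponential moments of the additive functional $\int_0^t V(X_s(x))\,ds$ into exponential moments of $\int_0^t r_N(X_s(x))\,ds$ up to the multiplicative factor $e^{Ct}$. The monotone convergence theorem, which was already invoked inside the proof of Proposition \ref{prop:FKestpre2}, ensures that passing from the localised expectations with stopping times $\tau_{D_i}$ to the full expectation on $\{t < \zeta(x)\}$ is harmless, so no further localisation argument is needed here.
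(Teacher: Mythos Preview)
Your proposal is correct and matches the paper's intended argument: the paper does not write out a proof for this corollary, treating it as an immediate consequence of Proposition~\ref{prop:FKestpre2} via exactly the pointwise bound $V\leq C(1+r_N)$ and the substitution $\theta=C$ that you describe.
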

Using Theorem \ref{th:expenbm} the following proposition and its corollary can be proved in much the same way.
\begin{proposition}\label{prop:FKestpre}
Suppose there exists constants $\nu \geq1$ and $\lambda\in \mathbb{R}$ such that inequality \eqref{eq:triineq} holds. Then
\begin{equation*}
\mathbb{E} \left[\mathbf{1}_{\{t< \zeta(x) \}}e^{\frac{\theta}{2} \int_0^t r_N^2(X_s(x))ds}\right]\leq \left(1-\theta tR(t) e^{\lambda t}\right)^{-\frac{\nu}{2}}\exp\left[\frac{\theta r_N^2(x) te^{\lambda t}}{2(1-\theta tR(t) e^{\lambda t})}\right] 
\end{equation*}
for all $t,\theta \geq 0$ such that $\theta tR(t) e^{\lambda t} <1$.
\end{proposition}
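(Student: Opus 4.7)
The plan is to mirror the proof of Proposition~\ref{prop:FKestpre2}, substituting Theorem~\ref{th:expenbm} for Theorem~\ref{th:inequality2}. First I would apply Jensen's inequality to the convex function $z\mapsto e^{(\theta t/2)z}$ against the uniform probability measure $ds/t$ on $[0,t]$, which gives the pathwise estimate
\begin{equation*}
e^{\frac{\theta}{2}\int_0^t r_N^2(X_s(x))\,ds}
= e^{\frac{\theta t}{2}\cdot\frac{1}{t}\int_0^t r_N^2(X_s(x))\,ds}
\leq \frac{1}{t}\int_0^t e^{\frac{\theta t}{2} r_N^2(X_s(x))}\,ds.
\end{equation*}
Using the exhaustion stopping times $\tau_{D_i}$ from the proof of Theorem~\ref{th:secradmomthm} to localise, inserting $\mathbf{1}_{\{t<\tau_{D_i}\}}$, taking expectations, and interchanging the order of integration by Tonelli (which is legitimate on account of the stopping times) yields
\begin{equation*}
\mathbb{E}\!\left[\mathbf{1}_{\{t<\tau_{D_i}\}}e^{\frac{\theta}{2}\int_0^t r_N^2(X_s(x))\,ds}\right]
\leq \frac{1}{t}\int_0^t \mathbb{E}\!\left[\mathbf{1}_{\{s<\tau_{D_i}\}}e^{\frac{\theta t}{2} r_N^2(X_s(x))}\right]\,ds.
\end{equation*}

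Next, I apply Theorem~\ref{th:expenbm} pointwise in $s\in[0,t]$, with $\theta$ there replaced by $\theta t$. The admissibility condition $\theta t\,R(s)e^{\lambda s}<1$ follows from the standing hypothesis at $s=t$, since $s\mapsto R(s)e^{\lambda s}=(e^{\lambda s}-1)/\lambda$ is nondecreasing in every sign regime of $\lambda$. This bounds the integrand by the explicit function
\begin{equation*}
h(s):=\bigl(1-\theta t\,R(s)e^{\lambda s}\bigr)^{-\nu/2}\exp\!\left[\frac{\theta t\,r_N^2(x)\,e^{\lambda s}}{2\bigl(1-\theta t\,R(s)e^{\lambda s}\bigr)}\right],
\end{equation*}
and the proposed right-hand side is exactly $h(t)$. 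The proof thus reduces to showing $\frac{1}{t}\int_0^t h(s)\,ds\leq h(t)$, after which monotone convergence in $i$ delivers the proposition.

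The main step is therefore to check that $h$ is nondecreasing on $[0,t]$, in parallel with the monotonicity step at the end of the proof of Proposition~\ref{prop:FKestpre2}. The cleanest route is the substitution $w=\theta t\,R(s)e^{\lambda s}$, under which $e^{\lambda s}=1+(\lambda/(\theta t))w$ and
\begin{equation*}
\log h=-\tfrac{\nu}{2}\log(1-w)+\frac{r_N^2(x)(\theta t+\lambda w)}{2(1-w)},\qquad \frac{d\log h}{dw}=\frac{\nu}{2(1-w)}+\frac{r_N^2(x)(\theta t+\lambda)}{2(1-w)^2}.
\end{equation*}
Nonnegativity of this derivative is immediate when $\theta t+\lambda\geq 0$, and the remaining case (where the sign interplay between $\lambda<0$ and $\theta t+\lambda$ must be tracked carefully) is the main technical point one needs to address to close the argument.
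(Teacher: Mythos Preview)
Your approach is exactly the one the paper has in mind: Jensen against the uniform measure on $[0,t]$, apply Theorem~\ref{th:expenbm} with $\theta$ replaced by $\theta t$, then use monotonicity of the resulting bound in $s$ to replace the average by the value at $s=t$, and finish by monotone convergence in $i$. Your reduction to the monotonicity of
\[
h(s)=\bigl(1-\theta t\,R(s)e^{\lambda s}\bigr)^{-\nu/2}\exp\!\left[\frac{\theta t\,r_N^2(x)\,e^{\lambda s}}{2\bigl(1-\theta t\,R(s)e^{\lambda s}\bigr)}\right]
\]
is correct, as is your derivative computation
\[
\frac{d\log h}{dw}=\frac{\nu}{2(1-w)}+\frac{r_N^2(x)(\theta t+\lambda)}{2(1-w)^2},\qquad w=\theta t\,R(s)e^{\lambda s}.
\]

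The ``remaining case'' you flag, however, is not a technicality to be tracked but a genuine obstruction. When $\theta t+\lambda<0$ one has $\frac{d\log h}{dw}<0$ as soon as $1-w<r_N^2(x)\,|\theta t+\lambda|/\nu$, and in particular $h$ is strictly \emph{decreasing} on the whole of $[0,t]$ whenever $r_N^2(x)\geq \nu/|\theta t+\lambda|$. In that regime $\tfrac{1}{t}\int_0^t h(s)\,ds>h(t)$ and the argument collapses. Worse, the stated inequality itself appears to fail there: for the one–dimensional Ornstein--Uhlenbeck process $dX_s=dB_s-\tfrac12 X_s\,ds$ (so $N=\{0\}$, $\nu=1$, $\lambda=-1$, with equality in \eqref{eq:triineq}) and, say, $t=1$, $\theta=\tfrac{1}{10}$, $r_N^2(x)=10$, the right-hand side is about $1.26$, whereas Jensen and Fubini give
\[
\mathbb{E}\!\left[e^{\frac{\theta}{2}\int_0^1 X_s^2\,ds}\right]\ \geq\ \exp\!\left(\frac{\theta}{2}\int_0^1\mathbb{E}[X_s^2]\,ds\right)=\exp\!\left(\tfrac{1}{20}\!\int_0^1(9e^{-s}+1)\,ds\right)\approx 1.40.
\]
So no rearrangement of this route will close the gap for general $\lambda\in\mathbb{R}$.

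Your argument is complete and matches the paper once $\theta t+\lambda\geq 0$, and in particular for $\lambda\geq 0$; note that the companion Proposition~\ref{prop:FKestpre2} carries precisely the hypothesis $\lambda\geq 0$. The extension of the hypothesis to $\lambda\in\mathbb{R}$ here looks like an oversight, and you should either restrict to $\lambda\geq 0$ (where your proof is fine as written) or record that the bound as stated need not hold for negative $\lambda$.
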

\begin{corollary}\label{cor:Vest}
Suppose there exists constants $\nu \geq1$ and $\lambda \in \mathbb{R}$ such that inequality \eqref{eq:triineq} holds and that $V$ is a measurable function on $M$ such that $V\leq C(1+\frac{1}{2}r_N^2)$ for some constant $C\geq 0$. Then
\begin{equation*}
\mathbb{E} \left[\mathbf{1}_{\{t< \zeta(x) \}}e^{\int_0^t V(X_s(x))ds}\right] \leq \left(1-C tR(t) e^{\lambda t}\right)^{-\frac{\nu}{2}}\exp\left[Ct+\frac{C r_N^2(x) te^{\lambda t}}{2(1-C tR(t) e^{\lambda t})}\right]
\end{equation*}
for all $t\geq 0$ such that $C tR(t) e^{\lambda t} <1$.
\end{corollary}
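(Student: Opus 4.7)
The plan is to reduce the estimate for the general potential $V$ directly to the quadratic case already handled by Proposition \ref{prop:FKestpre}. The hypothesis $V \leq C(1+\tfrac{1}{2}r_N^2)$ is pointwise, so along any path of $X(x)$, and in particular on the event $\{t<\zeta(x)\}$, we may integrate in time to obtain
\[
\int_0^t V(X_s(x))\,ds \;\leq\; Ct + \frac{C}{2}\int_0^t r_N^2(X_s(x))\,ds
\]
almost surely. Since the exponential is monotone increasing this yields the pointwise bound
\[
\mathbf{1}_{\{t<\zeta(x)\}}\,e^{\int_0^t V(X_s(x))\,ds} \;\leq\; e^{Ct}\,\mathbf{1}_{\{t<\zeta(x)\}}\,e^{\frac{C}{2}\int_0^t r_N^2(X_s(x))\,ds}.
\]

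Taking expectations and applying Proposition \ref{prop:FKestpre} with $\theta = C$ (which is a legitimate choice because $C\geq 0$) produces exactly the claimed bound, where the applicability constraint $\theta t R(t)e^{\lambda t} < 1$ of the proposition becomes the hypothesis $CtR(t)e^{\lambda t} < 1$ of the corollary.

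There is no real obstacle: the argument is a one-line reduction, parallel to the way Corollary \ref{cor:Vest2} follows from Proposition \ref{prop:FKestpre2}. The only points worth verifying are that the constant $C \geq 0$ guarantees a valid choice of $\theta$ in the proposition, and that the factor $e^{Ct}$ coming from the constant part of the bound on $V$ combines cleanly with the factor produced by Proposition \ref{prop:FKestpre} to give the stated right-hand side.
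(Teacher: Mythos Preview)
Your proposal is correct and is exactly the argument the paper has in mind: the paper states that Proposition~\ref{prop:FKestpre} and Corollary~\ref{cor:Vest} ``can be proved in much the same way'' as Proposition~\ref{prop:FKestpre2} and Corollary~\ref{cor:Vest2}, and your reduction via the pointwise bound $V\leq C(1+\tfrac12 r_N^2)$ followed by an application of Proposition~\ref{prop:FKestpre} with $\theta=C$ is precisely the parallel of how Corollary~\ref{cor:Vest2} is obtained from Proposition~\ref{prop:FKestpre2}.
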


\subsection{Further Applications}

Applications of the results and methods presented in this article have been explored in \cite{THESIS} and will feature in a subsequent article. In particular, we will show how one can use the Jacobian inequality given by Theorem \ref{th:estimatefinal} and moment estimates for a certain elementary bridge process, related to Brownian motion, to deduce lower bounds and an asymptotic relation for the \textit{integral of the heat kernel over a submanifold}. This object appears naturally in the study of \textit{submanifold bridge processes}, for which the lower bounds imply a gradient estimate sufficient to prove a semimartingale property. This is a new area of study which could lead to future developments related to the geometries of path or loop spaces.

\end{document}